\newcounter{cthm}
\newtheorem{thmb}[cthm]{Theorem}
\newtheorem{proposition}[equation]{Proposition}
\newtheorem{thm}[equation]{Theorem} 
\newtheorem{corr}[equation]{Corollary}
\newtheorem{lem}[equation]{Lemma}
\theoremstyle{definition}
\newtheorem{notation}[equation]{Notation}
\newtheorem{defin}[equation]{Definition}
\newtheorem{problem}[equation]{Problem}
\newtheorem{question}[equation]{Question}
\newtheorem{exam}[equation]{Example}
\newcommand{\EEE}{\mathscr{E}}
\newcommand{\OOO}{\mathscr{O}}
\let\emptyset\varnothing
\newcommand{\Addresses}{{
 \bigskip
 \footnotesize
 \textsc{Laboratory of Algebraic Geometry, Faculty of Mathematics \\ National Research University Higher School of Economics, and \\
 Independent University of Moscow}\\
 \textit{E-mail:} \texttt{kostyaloginov@gmail.com}
}}
\begin{document}
\author{Konstantin Loginov}
\title{On semistable degenerations of Fano varieties}
\thanks{Partially supported by the Russian Academic Excellence Project ’5-100’, Foundation for the Advancement of Theoretical Physics and Mathematics “BASIS”, and the Simons Foundation. }
\maketitle

\begin{abstract}Consider a family of Fano varieties $\pi: X \longrightarrow B\ni o$ over a curve germ with a smooth total space $X$. Assume that the generic fiber is smooth and the special fiber $F=\pi^{-1}(o)$ has simple normal crossings. Then $F$ is called a semistable degeneration of Fano varieties. We show that the dual complex of $F$ is a simplex of dimension $\leq \mathrm{dim}\ F$. Simplices of any admissible dimension can be realized for any dimension of the fiber. Using this result and the Minimal Model Program in dimension $3$ we reproduce the classification of the  semistable degenerations of del Pezzo surfaces obtained by Fujita. We also show that the maximal degeneration is unique and has trivial monodromy in dimension~$\leq3$. \end{abstract}

\section*{Introduction}
By a \emph{semistable family} we mean a family of projective algebraic varieties over a curve germ with a smooth total space such that the special fiber is reduced and has simple normal crossings. The semistable reduction theorem \cite{KKMS73} states that any family with a smooth generic fiber can be birationally transformed into a semistable one after a finite base change. We say that the special fiber of a semistable family is a \emph{semistable degeneration} of its generic fiber.

The dual complex (see Definition \ref{dual_complex}) of the special fiber is an important invariant of a degeneration. Its topology in some sense reflects the geometry of the generic fiber. There are many results along these lines. For example, a theorem of Kulikov \cite{Kul77} states that for a semistable degeneration of a K$3$ surfaces the dual complex can have exactly one of three types, and the maximal degeneration (such that its special fiber has the dual complex of maximal possible dimension) has a triangulation of a $2$-sphere as a dual complex. The three Kulikov's cases can be distinguished in terms of the monodromy around the special fiber. In particular, if the monodromy is trivial then every fiber of the family is smooth. We say that such family is \emph{smooth}.

It is natural to ask about the semistable degenerations of del Pezzo surfaces. In \cite{Fu90} Fujita obtained the classification of such degenerations. Later, Kachi in \cite{Ka07} used deformation theory to prove that all the cases in Fujita's list can be realized. For more results on the degenerations of surfaces see, for example, \cite{Per77}. There is also a notion of a dual complex of a singularity, see, for example, \cite{St08}.

In higher dimensions, de Fernex, Kollar and Xu showed that if the generic fiber of a semistable family is rationally connected then the dual complex of the special fiber is contractible, see \cite[Theorem 4]{dFKX12}. The main theorem of this paper is a more specific result in the case when the fibers of a semistable family are Fano varieties.

\begin{thmb}
\label{thma-A}
Let $\pi: X \longrightarrow B\ni o$ be a semistable family of $n$-dimensional Fano varieties. Then the dual complex of its special fiber $F=\pi^{-1}(o)$ is a simplex $\Delta^k$ of dimension~$k\leq~n$. In dimension $n\leq 3$ the maximal degeneration (such that $k=n$) is unique and has trivial monodromy. Moreover, it can be obtained as the blow-up of a flag of subspaces $$\{\mathrm{pt}\}=\mathbb{P}^0\subset \dots \subset \mathbb{P}^{n-1}$$ in a fiber of a smooth family whose fibers are isomorphic to $\mathbb{P}^n$. In this case, each of the $n+1$ components of the special fiber for $n=1,2,3$ is isomorphic to the blow-up of $\mathbb{P}^n$ in a flag of subspaces $$\{\mathrm{pt}\}=\mathbb{P}^0\subset \dots \subset \mathbb{P}^{n-2}.$$
\end{thmb}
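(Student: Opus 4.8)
The plan is to treat the pair $(X,F)$ as a relatively log Fano log smooth pair, read off the combinatorics of the special fiber from adjunction, and then handle the maximal case by an explicit construction together with a relative MMP reconstruction.

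For the first assertion I would begin by recording the positivity that drives everything. Since $o$ is a smooth point of $B$, the fiber $F=\pi^*o$ satisfies $F\sim_B 0$ and $K_B\sim 0$ near $o$, so $-(K_X+F)=-K_{X/B}$ is $\pi$-ample because the fibers are Fano; thus $(X,F)$ is log smooth and relatively log Fano. The bound $k\le n$ is immediate: a nonempty transverse intersection of $k+1$ of the smooth components of $F$ has codimension $k+1$ in the $(n+1)$-fold $X$, hence dimension $n-k\ge 0$. For the simplex claim I would set up an induction on dimension whose statement concerns an arbitrary log smooth log Fano pair $(Y,D)$ with $D$ reduced: the dual complex $\mathcal D(D)$ is a simplex. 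The engine is adjunction: for a component $D_i$, writing $D_i'=(\sum_{j\ne i}D_j)|_{D_i}$ one has $K_{D_i}+D_i'=(K_Y+D)|_{D_i}$, so $-(K_{D_i}+D_i')$ is ample and $(D_i,D_i')$ is again log Fano of one lower dimension; moreover $\mathcal D(D_i')$ is exactly the link of the vertex $[D_i]$ in $\mathcal D(D)$. By induction every vertex link is a simplex, with trivial base case (on curves, $-(K_Y+D)$ ample forces $Y=\mathbb P^1$ and $\#D\le 1$).

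I would close the induction with the combinatorial lemma: \emph{a nonempty connected finite simplicial complex all of whose vertex links are simplices is itself a simplex.} This I would prove by induction on the number of vertices, using that the closed star of a vertex $v$ equals $v*\mathrm{lk}(v)$, a full simplex on $\{v\}\cup W$; any vertex outside $\{v\}\cup W$ adjacent to some $w\in W$ would lie in the simplex $\mathrm{lk}(w)$ and hence be joined to $v$, a contradiction. The only remaining geometric input is connectedness of $\mathcal D(D)$, which follows from connectedness of the non-klt locus $D$ when $-(K_Y+D)$ is nef and big, and at the top level from the contractibility of $\mathcal D(F)$ supplied by \cite[Theorem 4]{dFKX12}. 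Applying the lemma to each $(F_i,D_i)$ shows every vertex link of $\mathcal D(F)$ is a simplex, and a final application to $\mathcal D(F)$ itself (connected since $F$ is) yields $\mathcal D(F)=\Delta^k$ with $k\le n$.

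For the maximal case $k=n$ I would first extract the local picture: $\Delta^n$ carries its top cell, so the $n+1$ components meet in a point $p$ near which $X$ is analytically the space with coordinates $(x_0,\dots,x_n)$, $F=\{x_0\cdots x_n=0\}$ and $\pi=x_0\cdots x_n$, the standard toric model. Triviality of the monodromy I would deduce from the weight spectral sequence of the limit mixed Hodge structure, whose top graded pieces are governed by $H^{\bullet}(\mathcal D(F))$: since $\Delta^n$ is contractible all higher cohomology vanishes, forcing the monodromy logarithm $N$ to vanish, and trivial monodromy then produces a smooth model as in the Kulikov philosophy recalled above. The existence half I would settle by the explicit construction: blow up, inside the central $\mathbb P^n$ of the smooth family $\mathbb P^n\times B$, the flag $\mathbb P^0\subset\cdots\subset\mathbb P^{n-1}$ successively along strict transforms. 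Each center is smooth, lies in the reduced central fiber, and is compatible with the accumulated boundary, so the result is semistable with SNC special fiber $\widetilde{Y_o}+E_0+\cdots+E_{n-1}$ and dual complex $\Delta^n$ by the toric local model; a direct computation (the $\mathbb P^{n-1}$-step being a blow-up along a Cartier divisor, hence trivial on $\widetilde{Y_o}$) identifies each of the $n+1$ components with the blow-up of $\mathbb P^n$ along the flag $\mathbb P^0\subset\cdots\subset\mathbb P^{n-2}$.

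Uniqueness in dimensions $n\le 3$ is where I expect the real difficulty, and where the Minimal Model Program in dimension $3$ enters. Starting from an arbitrary maximal degeneration, the plan is to run a relative MMP over $B$ on the smooth relatively Fano $(n+1)$-fold $X$, contracting the components $E_i$ in the reverse order of the flag blow-up, the intended output being a smooth $\mathbb P^n$-family of which $X$ is the flag blow-up. \textbf{The main obstacle} is to control this MMP: one must show that the extremal contractions occurring are exactly the expected divisorial ones recovering the flag (ruling out or resolving flips and unexpected fiber-type contractions), and one must match the log Fano components $(F_i,D_i)$ — which by the first part carry boundaries with dual complex $\Delta^{n-1}$ — against the classification of such maximal log Fano pairs, which in dimension $3$ is precisely what forces each component to be the stated flag blow-up and the generic fiber to be $\mathbb P^n$. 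This classification, together with the termination and type-control of the $3$-fold MMP, is the crux; the case $n=2$ then recovers Fujita's list as a corollary.
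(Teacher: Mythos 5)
Your treatment of the first assertion is correct in outline but takes a genuinely different route from the paper's. The paper (Theorem \ref{Simplex}) inducts on the number of vertices: applying the Shokurov--Koll\'ar connectedness theorem twice per step, it shows that any $m+1$ vertices of $\mathcal{D}(F)$ span a unique $m$-cell. You instead induct on dimension, identifying vertex links with dual complexes of lower-dimensional log Fano pairs via adjunction and finishing with a combinatorial lemma; the geometric input (adjunction plus connectedness of the non-klt locus) is the same, and your route separates geometry from combinatorics more cleanly. However, there is a gap you must close: the dual complex of an snc divisor is a priori only a CW-complex, not a simplicial complex --- two components can meet along several disjoint irreducible curves, producing distinct cells with the same vertex set --- whereas your combinatorial lemma is stated and proved for simplicial complexes. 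Excluding repeated cells is precisely the ``uniqueness'' half of the paper's induction (irreducibility of strata, Lemma \ref{strongly_conn}). The exclusion is in fact available inside your framework: two components $C_1, C_2$ of $D_1\cap D_2$ are disjoint by the snc condition, hence give two vertices of the link of $[D_1]$ joined by no edge, contradicting the inductive hypothesis that this link is a simplex. But as written your lemma does not apply to $\mathcal{D}(F)$.

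The other two parts contain genuine failures. First, the monodromy argument is wrong: contractibility of $\mathcal{D}(F)$ does not force $N=0$. In the limit mixed Hodge structure the weight-zero piece of $H^k$ is $H^k(\mathcal{D}(F))$, so its vanishing for $k>0$ kills only the extreme power $N^k$ on $H^k$, not $N$ itself; Type II degenerations of K3 surfaces, whose dual complex is a contractible segment, have $N\neq 0$. The paper obtains triviality in the opposite logical order: only after the maximal degeneration is proved unique and identified with the blow-up of a flag in a smooth family with fibers $\mathbb{P}^n$ does trivial monodromy follow, because the family is a modification of a smooth one. Second, uniqueness in dimension $\leq 3$ --- which you yourself flag as the unresolved crux --- is where all of the paper's work lies, and your outline omits its key tool: the $d$-semistability condition \eqref{dss}. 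For $n=3$ the paper runs no MMP at all; it combines Maeda's classification of log Fano $3$-folds with \eqref{dss}, which constrains the normal bundles $\mathscr{N}_{D_{ij}/F_i}$ of the double surfaces and eliminates every configuration except $F_i\simeq \mathbb{P}_{\mathbb{F}_1}(\OOO_{\mathbb{F}_1}(-s-f)\oplus\OOO_{\mathbb{F}_1})$ (Proposition \ref{maximal_dim3}). For $n=2$ it uses the $3$-fold MMP only through Lemma \ref{contr_surf} (each component is the exceptional divisor of a divisorial Mori contraction), combined with the triple point formula \eqref{triple_formula}. Your plan to ``control the relative MMP'' and ``match against the classification'' names the right objects but supplies no mechanism for excluding configurations; without the gluing constraints \eqref{dss} and \eqref{triple_formula} the case analysis cannot be carried out, so the uniqueness claim remains unproved in your proposal.
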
 
In \cite{Hu06} it is shown for any $n\geq 1$ any $k\leq n$ can be realized for some degeneration of $\mathbb{P}^n$. For $1 \leq n \leq 3$ his construction coincides with the maximal degeneration described in Theorem \ref{thma-A}. 

The special fiber of a semistable family satisfies the $d$-semistability condition introduced by Friedman in \cite{Fr83}, see Lemma~\ref{d-semistability_lemma}. We use this condition and the three-dimensional Minimal Model Program to reprove the result of \cite{Fu90}, that is, to obtain the classification of semistable degenerations of del Pezzo surfaces, see Theorem \ref{semistable_delpezzo}. This gives the case $n=2$ of the theorem (the case $n=1$ is trivial). 

The canonical line bundle is defined for simple normal crossing varieties. For such a variety $F$ we say that $F$ is Fano if $-K_F$ is ample. This is equivalent to the following condition: each component $F_j$ is log Fano (see Definition \ref{logFano}) with respect to the boundary $D_j$ given by the intersection with the other components.

In \cite{Tz15} Tziolas showed that any $d$-semistable simple normal crossing Fano variety can be smoothed, that is, included as the special fiber in a semistable family. Hence, the classification of semistable degenerations of Fano varieties is equivalent to the classification of $d$-semistable simple normal crossing Fano varieties. 

From this point of view it is important to study log Fano varieties. If we consider only smooth log Fano varieties with non-empty integral boundary then in dimension~$1$ the situation is simple: the only log Fano curve is a projective line. By contrast, already in dimension~$2$ there are infinitely many non-isomorphic log Fano varieties (they are called log del Pezzo surfaces). For example, one can take any Hirzebruch surface with the negative section as the boundary. The classification of log del Pezzo surfaces and three-dimensional log Fano varieties is contained in \cite{Ma83}. We use it to show that the maximal degeneration is unique in dimension $3$, see Proposition \ref{maximal_dim3}.

\

The paper is organised as follows. In the introduction we review some basic facts about the components of a semistable degeneration and give the necessary definitions. In section \ref{section_dual_complex} we prove that the dual complex is a simplex. Then, in Section \ref{section_delPezzo} we reprove the theorem on the classification of semistable degenerations of del Pezzo surfaces and show that the monodromy in each case is trivial. Finally, in Section \ref{section_maximal_degeneration} we prove that the maximal degeneration in dimension $3$ is unique and also has trivial monodromy.

\ 

The author would like to thank L. Soukhanov for the conversations that inspired the present paper, Yu. Prokhorov for constant support and helpful advice, I. Krylov and the Korean Institute for Advanced Study for hospitality during the work on this paper, D. Mineyev and C. Shramov for useful discussions. 

\section{Notation and conventions}
\label{section_notation}
We work over the field of complex numbers. Throughout the paper, we use the following notation:

\begin{itemize}[leftmargin=*]
\item
$\mathscr{N}_{X/Z}$: a normal bundle to a smooth subvariety $X$ in a smooth variety $Z$;
\item
$l$: a line on $\mathbb{P}^2$;
\item
$q$: a smooth conic on $\mathbb{P}^2$;
\item
$l_1, l_2$: two rulings on $\mathbb{P}^1\times\mathbb{P}^1$; 
\item
$l_{(1,1)}$: an irreducible curve of bidegree $(1,1)$ on $\mathbb{P}^1\times\mathbb{P}^1$; 
\item 
$\mathbb{F}_n$: $n\geq0$: the $n$-th Hirzebruch surface;
\item 
$s$: the $(-n)$-section on $\mathbb{F}_n$;
\item
$f$: a fiber on $\mathbb{F}_n$;
\item
$h$: a $(+n)$-section on $\mathbb{F}_n$, that is, $h\sim s + nf$;
\item
$H$: the tautological divisor on $\mathbb{P}_Z(\EEE)$ where $Z$ is a smooth variety and $\EEE$ is a vector bundle;
\item
$M$: the preimage of a hyperplane in $\mathbb{P}_{\mathbb{P}^k}(\EEE)$, $k\geq 1$, under the projection morphism;
\item
$M_f$, $M_s$: the preimage of a fiber and of the negative section, respectively, in~$\mathbb{P}_{\mathbb{F}_1}(\EEE)$ under the projection morphism.
\end{itemize}

%



\section{Preliminaries}
\label{section_prelim}
\begin{notation}\label{notation}
Let $F = \pi^{-1}(o)$ be the special fiber of a flat projective family $$\pi: X \longrightarrow B\ni o$$ over a curve germ such that the total space $X$ is smooth and the anti-canonical divisor class $-K_X$ is $\pi$-ample. Assume that $F=\sum F_i $ is reduced, reducible and has simple normal crossings (snc). Such family we call a \emph{semistable family of Fano varieties}. We keep these conventions throughout the paper. 
\end{notation}

\begin{defin}
\label{logFano}
By \emph{a log Fano variety} we mean a pair $(X, D)$ where $X$ is a smooth projective variety and $D$ is an effective $\mathbb{Q}$-divisor on $X$ with snc support whose coefficients are in $[0,1]$, called \emph{the boundary}, such that $-K_X-D$ is ample. A log Fano variety of dimension $2$ is called \emph{a log del Pezzo surface}. 
\end{defin}

Notice that the definition of a log Fano variety in the literature may be different from the one given above.

\begin{lem}
In the notation of \ref{notation} the pair
$\bigl( F_i,\ \sum_{j \neq i} F_j|_{F_i}\bigr)$ is a log Fano.
\end{lem}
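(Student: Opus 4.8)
The plan is to verify the three conditions in Definition~\ref{logFano} for the pair $(F_i, D_i)$, where I write $D_i := \sum_{j\neq i} F_j|_{F_i}$; the only condition with real content is the ampleness of $-K_{F_i}-D_i$, which I would extract from adjunction combined with the observation that the whole fiber $F$ is a principal divisor on $X$.

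First I would dispose of the two formal conditions. Since $X$ is smooth and $F=\sum F_j$ has simple normal crossings, every component $F_i$ is a smooth projective variety, and for each $j\neq i$ the intersection $F_i\cap F_j$ is a smooth divisor on $F_i$, these divisors meeting transversally. Hence $D_i$ is a reduced divisor with snc support, all of whose coefficients equal $1$ and therefore lie in $[0,1]$, so the pair is admissible in the sense of Definition~\ref{logFano}.

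The heart of the argument is the identity
$$-K_{F_i}-D_i \;=\; -K_X|_{F_i}.$$
To obtain it, I would apply adjunction to the smooth divisor $F_i\subset X$, which gives $K_{F_i}=(K_X+F_i)|_{F_i}$, and then add $D_i$:
$$K_{F_i}+D_i=(K_X+F_i)|_{F_i}+\Big(\sum_{j\neq i}F_j\Big)\Big|_{F_i}=(K_X+F)|_{F_i}.$$
Now I would use that $F=\pi^{-1}(o)$ is the full fiber over the point $o$ of the curve germ $B$: as $o$ is cut out by a uniformizer on $B$, its pullback $F$ is a principal divisor on $X$, so $\OOO_X(F)\cong\OOO_X$ and $F|_{F_i}$ is trivial. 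Substituting yields $K_{F_i}+D_i=K_X|_{F_i}$, which is the displayed identity.

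Finally, ampleness is immediate from Notation~\ref{notation}: $-K_X$ is $\pi$-ample, hence ample on the fiber $F$, and the restriction of an ample class to the closed subvariety $F_i\subseteq F$ remains ample, so $-K_X|_{F_i}=-K_{F_i}-D_i$ is ample and $(F_i,D_i)$ is log Fano. I do not expect a serious obstacle in this lemma: the computation is essentially formal, and the only points demanding care are the bookkeeping that shows $D_i$ is genuinely snc with unit coefficients and the correct invocation of the triviality of $\OOO_X(F)$ on the germ.
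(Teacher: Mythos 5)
Your proof is correct and follows essentially the same route as the paper: adjunction $K_{F_i}=(K_X+F_i)|_{F_i}$ combined with the triviality of $F=\pi^{*}o$ over the germ $B$ to get $-K_{F_i}-D_i=-K_X|_{F_i}$, whose ampleness follows from $\pi$-ampleness of $-K_X$. The extra bookkeeping you include (snc support of $D_i$, coefficients equal to $1$, ampleness surviving restriction to $F_i\subseteq F$) is implicit in the paper's terser argument.
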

\begin{proof}
Notice that $$F = \sum F_i = 0$$ over $B$. By the adjunction formula $$-K_X|_{F_i} = -K_{F_i} - \sum_{j\neq i} F_j|_{F_i}.$$ This divisor is ample by our assumption. \end{proof}

\begin{corr}
$ \bigl( F_{i_1}\cap \dots \cap F_{i_{m}},\quad \sum_{j \neq i_1, \dots, i_{m}} F_j|_{F_{i_1}\cap \dots \cap F_{i_{m}}} \bigr) $ is a log Fano variety.
\end{corr}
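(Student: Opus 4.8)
The plan is to mimic the proof of the preceding lemma, replacing a single adjunction by iterated adjunction along the transverse intersection $S := F_{i_1}\cap\dots\cap F_{i_{m}}$. First I would record the geometric input coming from the snc hypothesis: because $F=\sum F_i$ has simple normal crossings, the components $F_{i_1},\dots,F_{i_m}$ are smooth and meet transversally, so $S$ is a smooth subvariety of $X$ of codimension $m$; it is moreover projective, being a closed subvariety of the fiber $F=\pi^{-1}(o)$. For the same reason the restrictions $F_j|_S$ (for $j\notin\{i_1,\dots,i_m\}$) are smooth divisors on $S$ whose sum has snc support and all coefficients equal to $1$, so it is an admissible boundary in the sense of Definition~\ref{logFano}.

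Next I would apply adjunction $m$ times. Since the $F_{i_t}$ are smooth and cross normally, adding them one at a time gives
\[
K_S = \Bigl(K_X + F_{i_1}+\dots+F_{i_m}\Bigr)\big|_S.
\]
Then I would use, exactly as in the lemma, that $F=\sum_j F_j$ is the class of a fiber and is therefore linearly trivial on $X$ in the germ $B$, so that $F|_S\sim 0$. Writing $\sum_{t}F_{i_t}+\sum_{j\neq i_1,\dots,i_m}F_j=F$ and restricting to $S$, I obtain
\[
-K_S-\sum_{j\neq i_1,\dots,i_m}F_j\big|_S \;=\; -\bigl(K_X+F\bigr)\big|_S \;=\; -K_X\big|_S .
\]

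Finally I would conclude by ampleness. The divisor $-K_X$ is $\pi$-ample by the conventions of Notation~\ref{notation}, so its restriction to the fiber $F$ is ample, and hence so is its further restriction to the closed subvariety $S\subset F$. Therefore $-K_S-\sum_{j\neq i_1,\dots,i_m}F_j|_S$ is ample, which is precisely the statement that $\bigl(S,\ \sum_{j\neq i_1,\dots,i_m}F_j|_S\bigr)$ is log Fano.

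The only point requiring care — the main obstacle — is the validity of the iterated adjunction formula, i.e. verifying that transversality of the snc divisor $F$ genuinely lets me peel off the components $F_{i_t}$ one at a time so that $S$ is smooth and $K_S=(K_X+\sum_t F_{i_t})|_S$ holds on the nose; once this is granted, the computation is identical in spirit to the lemma. One could alternatively phrase the whole argument as an induction on $m$, deducing the $m$-fold case from the $(m-1)$-fold case by applying an adjunction-type version of the lemma to the log Fano pair produced at the previous step, but the direct iterated adjunction above is shorter.
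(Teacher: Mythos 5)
Your proposal is correct and takes essentially the same route as the paper, whose proof of this corollary literally reads ``Analogous'': iterated adjunction along the smooth transverse intersection, triviality of $F=\sum F_j$ over $B$, and $\pi$-ampleness of $-K_X$. The one point you gloss over is irreducibility of $S=F_{i_1}\cap\dots\cap F_{i_m}$ (a priori an snc intersection could be disconnected, while a log Fano \emph{variety} should be irreducible); the paper flags exactly this issue and defers it to Theorem~\ref{Simplex}, so you should do the same rather than assert $S$ is a subvariety outright.
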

\begin{proof}
Analogous. The fact that $F_{i_1}\cap \dots \cap F_{i_{m}}$ is irreducible will be independently proven later (Theorem \ref{Simplex}). 
\end{proof}

\begin{corr}
\label{ratsurfaces}
Each component $F_i$ is rationally connected. In particular, if $\mathrm{dim} \ X =~3$ then $F_i$ is a rational surface.
\end{corr}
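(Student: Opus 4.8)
The plan is to read rational connectedness off the log Fano structure on each component. By the Lemma above (each pair $(F_i,\ \sum_{j\neq i}F_j|_{F_i})$ is log Fano), the divisor $D_i:=\sum_{j\neq i}F_j|_{F_i}$ satisfies that $-(K_{F_i}+D_i)$ is ample. The one subtlety is that, because $F$ has simple normal crossings, the divisor $D_i$ is \emph{reduced}: each intersection $F_j\cap F_i$ occurs with coefficient $1$. Hence the pair $(F_i,D_i)$ is only log canonical, not Kawamata log terminal (klt), whereas the rational connectedness theorems I want to invoke are stated for klt pairs. So the first step is to perturb the boundary down to klt while keeping the anticanonical class ample.

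Concretely, I would consider the pair $(F_i,(1-\epsilon)D_i)$ for a small rational $\epsilon>0$. Its anticanonical class is $-(K_{F_i}+(1-\epsilon)D_i)=-(K_{F_i}+D_i)+\epsilon D_i$, the sum of the ample class $-(K_{F_i}+D_i)$ and the effective class $\epsilon D_i$. Since the ample cone is open in $N^1(F_i)_{\mathbb{R}}$ and this class varies continuously with $\epsilon$, it remains ample for all sufficiently small $\epsilon>0$. (On a Hirzebruch surface $\mathbb{F}_n$ with $D_i=s$ one checks $-(K+(1-\epsilon)s)=(1+\epsilon)s+(n+2)f$ is ample exactly for $\epsilon<2/n$, which illustrates that such an $\epsilon$ always exists.) At the same time, as $F_i$ is smooth and $(1-\epsilon)D_i$ is an snc divisor with coefficients $1-\epsilon<1$, the pair $(F_i,(1-\epsilon)D_i)$ is klt. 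Thus $F_i$ carries a klt log Fano structure, and by the rational connectedness theorem for log Fano pairs --- if $(X,\Delta)$ is klt and $-(K_X+\Delta)$ is nef and big, then $X$ is rationally connected (Zhang; Hacon--McKernan) --- the variety $F_i$ is rationally connected.

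For the final assertion, suppose $\dim X=3$, so that $\dim F_i=2$ and $F_i$ is a smooth projective rationally connected surface over $\mathbb{C}$. Rational connectedness forces all plurigenera and the irregularity to vanish, i.e. $q(F_i)=P_2(F_i)=0$, whence Castelnuovo's rationality criterion shows that $F_i$ is rational.

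The main obstacle I anticipate is exactly the klt-versus-lc issue: the rational connectedness theorem does not apply directly to the log canonical pair $(F_i,D_i)$, and the real content of the argument is the verification that lowering the boundary coefficient to $1-\epsilon$ preserves ampleness of $-(K_{F_i}+(1-\epsilon)D_i)$. Once that perturbation is in place, the rest is a direct appeal to the cited rational connectedness and rationality results.
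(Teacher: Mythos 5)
Your proof is correct and follows essentially the same route as the paper: the paper's proof also perturbs the boundary to $(1-\epsilon)\sum_{j\neq i}F_j|_{F_i}$ to get a klt log Fano pair and then cites Zhang \cite{Zh06} for rational connectedness, with rationality of the surface components following immediately. Your extra verifications (openness of the ample cone for the perturbation, and Castelnuovo's criterion via $q=P_2=0$ for the surface case) are exactly the details the paper leaves implicit.
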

\begin{proof}
The pair $$\Bigl(F_i,\quad (1-\epsilon)\sum_{j\neq i} F_j|_{F_i} \Bigr)$$ for $0<\epsilon \ll 1$ is a klt log Fano. Then the first assertion follows from \cite{Zh06}. The second assertion follows immediately. 
\end{proof}

Similarly, any intersection~$F_{i_1}\cap \dots \cap F_{i_{m}}$ is rationally connected.

\begin{lem}
\label{d-semistability_lemma}
Let $D=F_i \cap F_j$ for $i\neq j$. Then 
\begin{equation}
\label{dss}
\mathscr{N}_{D/ F_j} \otimes \mathscr{N}_{D/ F_i} \otimes \bigotimes_{k\neq i, j} \OOO_D(F_k|_D) \simeq \OOO_D.
\end{equation}
\end{lem}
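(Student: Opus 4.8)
The plan is to show that each of the three factors on the left-hand side of \eqref{dss} is the restriction to $D$ of a line bundle of the form $\OOO_X(F_k)$, so that the whole product collapses to $\OOO_X(F)|_D$, which is trivial because $F$ is a principal divisor on $X$.

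First I would record that $\OOO_X(F)\simeq\OOO_X$. Since $B$ is a curve germ, the point $o$ is cut out by a single uniformizer $t$, so $o=\operatorname{div}(t)$ is principal on $B$; pulling back, $F=\pi^{-1}(o)=\operatorname{div}(\pi^{*}t)$ is a principal divisor on $X$, whence $\OOO_X(F)\simeq \pi^{*}\OOO_B(o)\simeq\OOO_X$. Restricting to $D=F_i\cap F_j$ then gives $\OOO_X(F)|_D\simeq\OOO_D$, which is the target of the computation.

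Next I would identify the normal bundles via adjunction. Because $X$ is smooth and $F$ has snc, the components $F_i,F_j$ are smooth and meet transversally along the smooth $D$, which sits inside $F_j$ as the Cartier divisor $F_i|_{F_j}$ and inside $F_i$ as $F_j|_{F_i}$. Hence
$$\mathscr{N}_{D/F_j}=\OOO_{F_j}(D)|_D=\OOO_X(F_i)|_D,\qquad \mathscr{N}_{D/F_i}=\OOO_{F_i}(D)|_D=\OOO_X(F_j)|_D,$$
while for each $k\neq i,j$ one has directly $\OOO_D(F_k|_D)=\OOO_X(F_k)|_D$. Tensoring the three factors, every component $F_k$ contributes exactly once, so
$$\mathscr{N}_{D/F_j}\otimes\mathscr{N}_{D/F_i}\otimes\bigotimes_{k\neq i,j}\OOO_D(F_k|_D)=\bigotimes_{k}\OOO_X(F_k)|_D=\OOO_X\Bigl(\textstyle\sum_k F_k\Bigr)\Big|_D=\OOO_X(F)|_D\simeq\OOO_D,$$
as required.

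There is no deep obstacle here; the proof is essentially bookkeeping. The two points to handle with care are the triviality of $\OOO_X(F)$, which genuinely relies on the germ structure of $B$ (so that $o$, hence $F$, is principal), and the adjunction-type identification of the two normal bundles, where one must check that $D$ is indeed the transversal intersection so that $\OOO_{F_j}(D)=\OOO_X(F_i)|_{F_j}$ and symmetrically, ensuring each $F_k$ is accounted for precisely once in the final product.
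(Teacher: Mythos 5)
Your proof is correct and takes essentially the same route as the paper's: the paper also observes that $\sum_k F_k$ is trivial over the base (hence trivial when restricted to $D$) and identifies $\mathscr{N}_{D/F_j}=\OOO_D(F_i|_D)$ and $\mathscr{N}_{D/F_i}=\OOO_D(F_j|_D)$, so the product collapses to $\OOO_X(F)|_D\simeq\OOO_D$. You merely spell out the bookkeeping (principality of the fiber and the adjunction identifications) in more detail than the paper's three-line argument.
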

\begin{proof}
Since $\sum F_k = 0$ over $B$ one has $\sum F_k|_D = 0$. Notice that $\OOO_D(F_i|_{D})= \mathscr{N}_{D/ F_j}$ and $\OOO_D(F_j|_{S})= \mathscr{N}_{D/ F_i}$. The assertion follows.
\end{proof}

\begin{defin}[{\cite{Fr83}}]
We refer to the equation \eqref{dss} as \it{the $d$-semistability condition}. 
\end{defin}

Notice that \eqref{dss} is just a corollary of the original condition proposed by \cite{Fr83}. However, this is enough for our purposes. The particular case of 
Lemma~\ref{d-semistability_lemma} in dimension $3$ is the following

\begin{corr}[{\cite[Triple point formula]{Kul77}}]
\label{triple_point} 
Let $\mathrm{dim}\ X = 3$, and let $C = F_i \cap F_j\subset~F$. Then the following formula holds: 
\begin{equation} 
\label{triple_formula}
C|_{F_i}^2 + C|_{F_j}^2 + n_C = 0
\end{equation} 
where $n_C$ is the number of triple points of the divisor $F$ along $C$.
\end{corr}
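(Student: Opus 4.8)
The statement is the dimension-three specialization of the $d$-semistability condition of Lemma \ref{d-semistability_lemma}, so the plan is to extract the formula by taking degrees of line bundles in the isomorphism \eqref{dss}. First I would observe that when $\dim X = 3$ the intersection $D = C = F_i \cap F_j$ is a smooth curve, since $F$ has simple normal crossings; hence every sheaf appearing in \eqref{dss} is a line bundle on the curve $C$. Because the tensor product of these line bundles is isomorphic to the trivial bundle $\OOO_C$, whose degree is zero, the identity of line bundles collapses to an additive identity of integers once I apply the degree homomorphism $\mathrm{Pic}(C) \to \mathbb{Z}$.

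Next I would identify each term geometrically. The curve $C$ sits as a divisor inside the surface $F_i$, and its normal bundle satisfies $\mathscr{N}_{C/F_i} \simeq \OOO_{F_i}(C)|_C$; therefore $\deg \mathscr{N}_{C/F_i}$ equals the self-intersection number $C|_{F_i}^2$ computed on the surface $F_i$, and symmetrically $\deg \mathscr{N}_{C/F_j} = C|_{F_j}^2$. For the remaining factor, the degree of $\OOO_C(F_k|_C)$ counts, with multiplicity, the points of $C$ lying on the component $F_k$, i.e. the points where the three surfaces $F_i$, $F_j$, $F_k$ meet.

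It then remains to interpret the twist term $\deg \bigotimes_{k \neq i,j} \OOO_C(F_k|_C)$ as the triple point count $n_C$. Here I would invoke the snc hypothesis: a point where $F_i$, $F_j$, $F_k$ all meet is a transverse triple point of $F$, so it contributes exactly $1$ to $\deg \OOO_C(F_k|_C)$, and no component $F_k$ with $k \neq i,j$ contains $C$ or is tangent to it along $C$. Summing the contributions over all $k \neq i,j$ therefore gives precisely the total number $n_C$ of triple points of $F$ lying on $C$. Taking degrees throughout \eqref{dss} and using $\deg \OOO_C = 0$ yields $C|_{F_i}^2 + C|_{F_j}^2 + n_C = 0$, as claimed.

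The argument is short, and the only point requiring care — the one I would regard as the mild ``obstacle'' — is the dictionary in the last step: confirming that the abstract line-bundle identity \eqref{dss} translates into the stated intersection numbers, and in particular that the snc condition guarantees every triple point is counted with multiplicity exactly one. Once this bookkeeping is in place, the formula is an immediate consequence of Lemma \ref{d-semistability_lemma}.
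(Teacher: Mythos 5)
Your proposal is correct and follows exactly the route the paper intends: the paper states the triple point formula as the dimension-three specialization of the $d$-semistability condition of Lemma \ref{d-semistability_lemma}, obtained by taking degrees of the line bundles in \eqref{dss}, with $\deg \mathscr{N}_{C/F_i} = C|_{F_i}^2$, $\deg \mathscr{N}_{C/F_j} = C|_{F_j}^2$, and the snc hypothesis ensuring the twist term counts each triple point transversally, hence exactly once. Your write-up simply makes explicit the bookkeeping the paper leaves implicit, so there is nothing to correct.
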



\section{The dual complex}
\label{section_dual_complex}
\begin{defin}
\label{dual_complex}
\emph{The dual complex}, denoted by $\mathcal{D}(F)$, of a simple normal crossing divisor $F=\sum F_i$ on a smooth variety $X$ is a CW-complex whose vertices are one-to-one correspondence with the irreducible components $F_i$ of $F$ and whose $m$-faces correspond bijectively to the irreducible components of the intersection of $m + 1$ irreducible components $F_{i_1}\cap \dots \cap F_{i_{m+1}}$ for $i_1 < \dots < i_{m+1}$ (they are also called strata of~$\mathcal{D}(F)$). The attaching maps are defined in the natural way.
\end{defin}

Obviously, the dimension of $\mathcal{D}(F)$ does not exceed the dimension of $F$.

\begin{lem}[{\cite[Lemma 2.4]{Ma83}}]
\label{strongly_conn}
In the assumptions of Section \ref{section_prelim}, the special fiber $F$ is strongly connected, that is, $F_i\cap F_j\neq \emptyset$ for any~$i, j$. Moreover, $F_i\cap F_j$ is irreducible.
\end{lem}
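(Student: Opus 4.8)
The plan is to deduce both assertions from the Kollár--Shokurov connectedness principle applied to the log Fano pairs produced by the Corollary, organised as an induction on $n=\dim F$. Recall first the two inputs I will need from the connectedness principle. Applied relatively to $(X,F)$ --- where $-(K_X+F)$ agrees with $-K_X$ over the germ $B$ and is therefore $\pi$-ample --- it shows that the non-klt locus, namely $F$ itself, is connected, so $F$ is connected and hence so is its dual complex $\mathcal{D}(F)$. Applied absolutely to a log Fano pair $(S,B_S)$, where $B_S=\sum_{k}F_k|_S$ is the boundary of a stratum $S$ (a log Fano pair by the Corollary), it shows that $B_S$, being the non-klt locus of $(S,B_S)$, is connected.

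The heart of the matter is strong connectedness, which I would prove by induction on $n$ through the following internal statement: \emph{any connected reduced snc divisor $W=\sum W_k$ on a smooth projective variety, all of whose strata are log Fano, is strongly connected}. The family $F$ satisfies these hypotheses. For each component $F_i$ the boundary $D_i=\sum_{j\neq i}F_j|_{F_i}$ is again such a configuration, of dimension $n-1$: it is snc on the smooth $F_i$, it is connected by the previous paragraph, and every stratum of $D_i$ is a stratum of $F$ carrying the \emph{same} induced boundary (since $F_i$ contains it, the index $i$ drops out either way), hence is log Fano. The key observation is that $\mathcal{D}(D_i)$ is precisely the link of the vertex $v_i$ in $\mathcal{D}(F)$. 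By the inductive hypothesis each $D_i$ is strongly connected, that is, every vertex-link of $\mathcal{D}(F)$ has a complete $1$-skeleton; concretely, whenever $j,k$ are neighbours of $i$ one gets $F_i\cap F_j\cap F_k\neq\emptyset$. A purely combinatorial path-shortening argument then upgrades connectedness to a complete $1$-skeleton: given a path $x_0,\dots,x_m$ in the $1$-skeleton, completeness of the link of $x_1$ supplies the edge $x_0x_2$, shortening the path, and iterating joins $x_0$ to $x_m$. This is exactly strong connectedness of $F$, and the base case $n=0$ (a single point) is trivial.

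Finally, irreducibility of a now-nonempty stratum $D:=F_i\cap F_j$ follows directly, without further induction, by isolating it as a log canonical centre. Working on the log Fano pair $(F_i,D_i)$, I would lower the coefficients of every boundary component except $F_j|_{F_i}$: for $0<\epsilon\ll1$ the $\mathbb{Q}$-divisor $\Delta=F_j|_{F_i}+(1-\epsilon)\sum_{k\neq i,j}F_k|_{F_i}$ still satisfies that $-(K_{F_i}+\Delta)=-(K_{F_i}+D_i)+\epsilon\sum_{k\neq i,j}F_k|_{F_i}$ is ample, since the ample cone is open and we are perturbing the ample class $-(K_{F_i}+D_i)$ by a small effective one. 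The non-klt locus of the snc pair $(F_i,\Delta)$ is exactly $\lfloor\Delta\rfloor=F_j|_{F_i}=D$, so the connectedness principle forces $D$ to be connected; being smooth, it is irreducible.

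The step I expect to be the main obstacle is the bookkeeping in the inductive passage: verifying that the boundary $D_i$ genuinely inherits the hypothesis that all its strata are log Fano with the correct induced boundaries, and that $\mathcal{D}(D_i)$ is identified with the link of $v_i$. Everything else --- the combinatorial shortcut, the coefficient perturbation, and the two connectedness inputs --- is then routine, modulo checking the nef-and-big hypothesis of the connectedness principle at each application, which here is immediate since all the relevant divisors are ample.
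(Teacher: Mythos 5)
Your proof is correct, but for the nonemptiness assertion it takes a genuinely different route from the paper's. The paper proves $F_i\cap F_j\neq\emptyset$ in one stroke: it applies the Shokurov--Koll\'ar connectedness theorem relatively over $B$ on the total space, to the pair $\bigl(X,\ F_i+F_j+\sum_{k\neq i,j}(1-\epsilon)F_k\bigr)$, whose non-klt locus is exactly $F_i\cup F_j$; connectedness of that locus near the special fiber forces $F_i\cap F_j\neq\emptyset$. This ``two coefficient-one components, perturb the rest'' device is the same one the paper later runs on the strata $Z_{m-1}$ to prove Theorem \ref{Simplex}. You never use it: you invoke connectedness only with the full reduced boundary (so the non-klt locus is the whole boundary), once relatively to get $F$ connected and otherwise absolutely on strata, and you make up the difference with an induction on dimension and the path-shortening argument in the dual graph. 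The bookkeeping you flag as the main obstacle does check out: since snc intersections are smooth, distinct components of $F_j\cap F_i$ are pairwise disjoint, so a stratum $S\subset F_i$ inherits the same boundary whether computed inside $D_i$ or inside $F$, and $\mathcal{D}(D_i)$ is indeed the link of $v_i$; note also that the ambient variety's projectivity is never actually used, so applying your internal statement to $F\subset X$, where $X$ is only projective over the germ $B$, is harmless. What the paper's argument buys is brevity --- two applications of connectedness in total --- and a template that generalizes immediately to higher strata. What yours buys is an intrinsic statement about snc varieties with log Fano strata, needing the family only for connectedness of $F$ (which could even be replaced by Zariski connectedness for the proper flat $\pi$); the cost is a longer argument that as stated yields only the $1$-skeleton claim and would have to be rerun, or combined with the paper's trick, to recover Theorem \ref{Simplex}. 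Your irreducibility argument is identical to the paper's: the same perturbed pair $\bigl(F_i,\ F_j|_{F_i}+(1-\epsilon)\sum_{k\neq i,j}F_k|_{F_i}\bigr)$ and the same smoothness conclusion.
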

\begin{proof}
We give a proof alternative to that of \cite{Ma83}. Consider a log Fano pair $$\Bigl(X,\quad F_i + F_j+ \sum_{k\neq i,j} (1-\epsilon) F_k\Bigr)$$ for $0 < \epsilon \ll 1$. Then $F_i\cap F_j\neq \emptyset$ by Shokurov-Kollár connectedness theorem (we use it in the simplest form, see, for example, \cite[6.50]{CKS04}). Applying the same theorem to a log Fano pair $$\Bigl(F_i,\quad F_j|_{F_i}+\sum_{k\neq i, j} (1-\epsilon) F_k|_{F_i}\Bigr)$$ we obtain that $F_i\cap F_j$ is connected. Irreducibility follows from the snc condition.
\end{proof}

\begin{thm}
\label{Simplex}
Let $\mathrm{dim}\ X = n+1$. Then the dual complex $\mathcal{D}(F)$ is a simplex $\Delta^k$ of dimension~$k\leq n$.
\end{thm}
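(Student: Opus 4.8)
The plan is to reduce the theorem to a single claim: \emph{every collection of components of $F$ has non-empty and irreducible intersection.} Granting this, every subset of the vertex set of $\mathcal{D}(F)$ spans exactly one face, so $\mathcal{D}(F)$ is the full simplex on its vertices, say $\Delta^{N-1}$ when $F$ has $N$ components $F_1,\dots,F_N$. The dimension bound then drops out of a codimension count: because the components meet transversally (the snc condition of Notation~\ref{notation}), the common intersection of all $N$ components is a smooth stratum of dimension $(n+1)-N$, and its non-emptiness forces $(n+1)-N\geq 0$, i.e. $N\leq n+1$. Hence $k=N-1\leq n$, as claimed.

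I would prove the claim by induction on the number $m$ of components, the cases $m\le 2$ being exactly Lemma~\ref{strongly_conn}. Assume every intersection of at most $m-1$ components is non-empty and irreducible; by the corollaries of Section~\ref{section_prelim} each such stratum is then a smooth projective log Fano variety. To get non-emptiness of an $m$-fold intersection $F_{i_1}\cap\dots\cap F_{i_m}$, I would set $Z=F_{i_1}\cap\dots\cap F_{i_{m-2}}$ (irreducible and log Fano by induction, and equal to $X$ when $m=2$) and consider on $Z$ the pair
$$\Bigl(Z,\quad F_{i_{m-1}}|_Z+F_{i_m}|_Z+\sum_{k\neq i_1,\dots,i_m}(1-\epsilon)F_k|_Z\Bigr), \qquad 0<\epsilon\ll 1.$$
As in Lemma~\ref{strongly_conn}, the associated $-(K_Z+\cdots)$ equals the ample log Fano divisor of $Z$ plus the effective term $\epsilon\sum_{k}F_k|_Z$, hence stays ample, so the Shokurov--Kollár connectedness theorem applies and shows that the non-klt locus $(Z\cap F_{i_{m-1}})\cup(Z\cap F_{i_m})$ is connected. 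Each of these two pieces is a non-empty $(m-1)$-fold intersection by the inductive hypothesis, so they must meet, and their intersection is precisely $F_{i_1}\cap\dots\cap F_{i_m}$.

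Irreducibility of the $m$-fold intersection I would obtain by the same mechanism one level up: taking $Z'=F_{i_1}\cap\dots\cap F_{i_{m-1}}$ (irreducible by induction) together with the pair $\bigl(Z',\ F_{i_m}|_{Z'}+\sum_{k\neq i_1,\dots,i_m}(1-\epsilon)F_k|_{Z'}\bigr)$, the connectedness theorem shows that the non-klt locus $Z'\cap F_{i_m}=F_{i_1}\cap\dots\cap F_{i_m}$ is connected; being a transverse intersection of snc components it is smooth, and smooth plus connected yields irreducible. This closes the induction, and the simplex structure together with the dimension bound follow as above. I expect the non-emptiness step to be the real obstacle: a complete $1$-skeleton does not by itself force the higher intersections to be non-empty, and it is exactly here that the Fano hypothesis is used, via the connectedness theorem applied to the log Fano strata. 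Everything else---irreducibility and the codimension count---is comparatively formal once non-emptiness is secured.
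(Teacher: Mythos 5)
Your proposal is correct and follows essentially the same route as the paper: an induction starting from Lemma~\ref{strongly_conn}, where non-emptiness of an $m$-fold intersection is obtained by applying the Shokurov--Koll\'ar connectedness theorem to the log Fano pair on the $(m-2)$-fold intersection with the last two components at full coefficient, and irreducibility by applying it once more one level up, with the dimension bound coming from the snc/transversality condition. The only differences are cosmetic (an index shift in the induction and a more explicit codimension count for the bound $k\leq n$).
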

\begin{proof}
We use induction to show that for any set of $m+1\leq n+1$ vertices $\{ i_1, \dots, i_{m+1} \}$ in $\mathcal{D}(F)$ there exists a unique $m$-simplex with these vertices. By Lemma \ref{strongly_conn} this is true for $m=1$. For any $k$ such that $1\leq k\leq m+1$ put $$Z_0 = X, \ \ Z_{k}=F_{i_1}\cap \dots \cap F_{i_{k}}.$$ Assume that the statement is true for $m$ vertices, in particular, both $Z_{m-1}\cap F_{i_m}=Z_m$ and $Z_{m-1}\cap F_{i_{m+1}}$ are non-empty. Consider a log Fano pair 
$$\Bigl(Z_{m-1},\quad F_{i_{m}}|_{Z_{m-1}}+F_{i_{m+1}}|_{Z_{m-1}}+\sum_{k\neq i_1, \dots, i_{m-1}} (1-\epsilon) F_k|_{Z_{m-1}}\Bigr)$$ and observe that by Shokurov-Kollár connectedness theorem $Z_{m+1}$ is non-empty. Next, consider a log Fano pair
$$\Bigl(Z_m,\quad F_{i_{m+1}}|_{Z_m}+\sum_{k\neq i_1, \dots, i_m} (1-\epsilon) F_k|_{Z_m}\Bigr)$$
to obtain that $Z_{m+1}$ is connected. Hence, the first assertion is proven. Restriction on the dimension follows from the snc condition.
\end{proof}

\begin{corr}
\label{dim2_case}
Let $\mathrm{dim}\ X = 2$. Then $F = F_1 + F_2$ and $F_1\simeq F_2 \simeq \mathbb{P}^1$.
\end{corr}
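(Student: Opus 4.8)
The plan is to first pin down the combinatorics of $F$ and then identify its components, both steps being short deductions from results already established in the excerpt. Since $\dim X = 2$, we are in the case $n=1$ of Theorem \ref{Simplex}, so the dual complex $\mathcal{D}(F)$ is a simplex $\Delta^k$ with $k \leq 1$. On the other hand, by the standing assumptions in Notation \ref{notation} the special fiber $F$ is reducible, so it has at least two irreducible components, which correspond to at least two vertices of $\mathcal{D}(F)$. Hence $k \geq 1$, forcing $k=1$ and $\mathcal{D}(F) = \Delta^1$. A $1$-simplex has exactly two vertices and one edge, so $F = F_1 + F_2$, and by Lemma \ref{strongly_conn} together with the snc hypothesis the intersection $F_1 \cap F_2$ is a single reduced (transversal) point.

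Next I would identify the components. Because $X$ is smooth and $F$ is snc, each $F_i$ is a smooth projective curve. By Corollary \ref{ratsurfaces} each $F_i$ is rationally connected, and a smooth rationally connected curve is $\mathbb{P}^1$; this already gives $F_1 \simeq F_2 \simeq \mathbb{P}^1$. As an independent check one can argue directly from the first Lemma of Section \ref{section_prelim}: the pair $\bigl(F_i,\ F_j|_{F_i}\bigr)$ is log Fano, so $-K_{F_i} - F_j|_{F_i}$ is ample. Since $F_j|_{F_i}$ is the intersection point, an effective divisor of degree $1$, ampleness forces $\deg(-K_{F_i}) = 2 - 2g_i > 1$, hence $g_i = 0$ and $F_i \simeq \mathbb{P}^1$.

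The only point carrying genuine content is establishing that there are exactly two components: this combines the dimension bound $k \leq n = 1$ from Theorem \ref{Simplex} with the reducibility assumption to conclude $k = 1$. The passage to $\mathbb{P}^1$ is then immediate from the rational connectedness (or log Fano) statements proved earlier, so I do not anticipate any serious obstacle here.
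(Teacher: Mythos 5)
Your proposal is correct and follows essentially the same route as the paper: the paper's one-line proof invokes Theorem~\ref{Simplex} (which, combined with the reducibility assumption in Notation~\ref{notation}, forces exactly two components) and the fact that any log Fano curve is $\mathbb{P}^1$, which is precisely your degree computation $\deg(-K_{F_i}-F_j|_{F_i})>0 \Rightarrow g_i=0$. Your primary identification via rational connectedness (Corollary~\ref{ratsurfaces}) is an equally immediate variant of the same step, so there is no substantive difference.
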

\begin{proof}
Follows from Theorem \ref{Simplex} and the fact that any log Fano curve is isomorphic to~$\mathbb{P}^1$.
\end{proof}

\begin{proposition}[{\cite[Example 4.3]{Hu06}}]
A simplex $\Delta^k$ of any dimension $k\leq n$ can be realised as a dual complex $\mathcal{D}(F)$ of the special fiber $F$ for a semistable family whose generic fiber is isomorphic to $\mathbb{P}^{n}$. 
\end{proposition}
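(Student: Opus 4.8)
The plan is to realize $\Delta^k$ by an explicit sequence of blow-ups of the trivial family, modifying only the central fiber. I would start from $X_0 = \mathbb{P}^n \times B$ with projection $\pi_0$ to $B$; its special fiber is $\mathbb{P}^n$ with multiplicity one, so $\mathcal{D}(F)$ is a single point $\Delta^0$, which settles the case $k=0$. For $k \geq 1$ I fix inside the central fiber $\mathbb{P}^n \times \{o\}$ a flag of linear subspaces
$$\mathbb{P}^0 \subset \mathbb{P}^1 \subset \cdots \subset \mathbb{P}^{k-1} \subset \mathbb{P}^n$$
and perform blow-ups $\sigma_i \colon X_i \to X_{i-1}$, $i = 1, \dots, k$, where $\sigma_i$ blows up the strict transform of $\mathbb{P}^{i-1}$ (i.e.\ in increasing order of dimension). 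Since every centre lies over $o \in B$, the generic fiber is never touched and remains isomorphic to $\mathbb{P}^n$, which is exactly the condition demanded by the statement. For $k=n$ this is precisely the flag of Theorem~\ref{thma-A}.

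Next I would verify that each $X_i \to B$ is again semistable. Smoothness of the total space $X_i$ is automatic, since at each step we blow up a smooth centre in a smooth variety. The decisive local point is that the new exceptional divisor $E_i$ enters the central fiber with multiplicity \emph{one}: the fiber is cut out by the pulled-back base parameter $t$, and along each centre $t$ vanishes to order exactly one, because the (strict transform of the) flag member is contained in a single component of the current special fiber and is \emph{not} contained in its double locus. Hence $\sigma_i^*\{t=0\} = \widetilde{\{t=0\}} + E_i$ with $E_i$ reduced, and the special fiber of $X_i$ stays reduced. For the simple normal crossing property I would argue by induction that the strict transform of $\mathbb{P}^{i-1}$ is smooth and meets the already-constructed snc boundary transversally along strata, so that the blow-up turns a transverse configuration into a transverse configuration with one additional coordinate hyperplane; a short computation in the standard affine charts of the blow-up makes this explicit (and in particular confirms that the centre touches the double locus only in lower strata, keeping $\mathrm{ord}\, t = 1$).

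Finally I would compute $\mathcal{D}(F)$ for $X_k \to B$. Its special fiber has exactly $k+1$ components: the strict transform $\widetilde{F}_0$ of $\mathbb{P}^n$ together with the exceptional divisors $E_1, \dots, E_k$. I must show that for every subset $S \subseteq \{0,1,\dots,k\}$ the intersection $\bigcap_{i\in S}$ of the corresponding components is nonempty and irreducible, which is exactly the combinatorics of the full simplex $\Delta^k$. In the affine charts, near the locus lying over the flag point $\mathbb{P}^0$, the whole configuration is modelled by $k+1$ of the coordinate hyperplanes in affine $(n+1)$-space, whose common intersection is a coordinate subspace of dimension $n-k \geq 0$; this realizes the top face (all $k+1$ components meet) and, together with the lower strata, the entire simplex.

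The main obstacle is precisely this bookkeeping across the iterated blow-up: at every step one must check that the next centre remains smooth and transverse to the evolving snc fiber — equivalently that the flag structure survives under strict transform and never slips into the double locus — and that no stratum splits into several irreducible pieces. Both points can be settled by carrying the explicit chart computation sketched above through the induction. Alternatively, once one checks that the resulting family is a semistable family of Fano varieties, the simplex structure of $\mathcal{D}(F)$ is immediate from Theorem~\ref{Simplex}, and it then remains only to confirm that the dimension of the simplex equals $k$.
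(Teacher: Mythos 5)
Your construction is exactly the one the paper uses (following \cite[Example 4.3]{Hu06}): blowing up, in increasing order of dimension, the strict transforms of a flag $\mathbb{P}^0 \subset \mathbb{P}^1 \subset \dots \subset \mathbb{P}^{k-1}$ inside the central fiber of the trivial family $\mathbb{P}^n \times B$, so that the generic fiber stays $\mathbb{P}^n$ and the special fiber acquires one new component per blow-up. The paper offers only the citation plus a one-line sketch of this construction, and the checks you supply (each exceptional divisor enters with multiplicity one because the centre lies in a single component of the current fiber and avoids its double locus generically, transversality is preserved inductively, and the strata near the point over $\mathbb{P}^0$ form the full simplex) are the correct way to fill in what the paper leaves to the reader.
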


This example can be obtained by blowing-up (starting from a point) a flag of subspaces $$\{\mathrm{pt}\} = \mathbb{P}^0 \subset \mathbb{P}^1 \subset \dots \subset \mathbb{P}^{n-1}$$ in the special fiber of a smooth family whose fibers are isomorphic to $\mathbb{P}^{n}$. One checks that each of $n+1$ components is isomorphic to $\mathbb{P}^n$ blown-up in a flag of subspaces $$\{\mathrm{pt}\} = \mathbb{P}^0 \subset \mathbb{P}^1 \subset \dots \subset \mathbb{P}^{n-2}.$$

\section{Degenerations of del Pezzo surfaces}
\label{section_delPezzo}
As before, we work in the assumptions~\ref{notation} of Section~\ref{section_prelim}.
\begin{lem}
\label{contr_surf}
Let $\mathrm{dim}\ X = 3$. Then for any component $F_j$ of the special fiber $F=\sum F_i$ there exists an extremal divisorial Mori contraction $f_j$ such that $\mathrm{Exc}\ f_j = F_j$. Moreover, any component of $f_j(F)$ is normal, hence $f_j$ induces a contraction on each component of $F$.
\end{lem}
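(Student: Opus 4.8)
The plan is to realise $f_j$ as the contraction of a single $K_X$-negative extremal ray of the relative cone $\overline{\mathrm{NE}}(X/B)$, chosen so that $F_j$ is negative on it. First I would record that, since $-K_X$ is $\pi$-ample, every curve $C$ contracted by $\pi$ (equivalently, every curve contained in a fibre) satisfies $K_X\cdot C<0$. Hence $\overline{\mathrm{NE}}(X/B)$ lies entirely in the $K_X$-negative halfspace, so by the relative Cone and Contraction Theorems it is rational polyhedral, all its extremal rays are $K_X$-negative, and each admits a contraction over $B$. Next I would exhibit a curve on which $F_j$ is negative. Exactly as in Lemma~\ref{d-semistability_lemma}, restricting $F=\sum_k F_k\equiv_\pi 0$ to $F_j$ gives $F_j|_{F_j}=-\sum_{i\neq j}F_i|_{F_j}$, so that
\[
\mathscr{N}_{F_j/X}\simeq\OOO_{F_j}\Bigl(-\textstyle\sum_{i\neq j}(F_i\cap F_j)\Bigr),
\]
and the divisor $\sum_{i\neq j}(F_i\cap F_j)$ is a nonzero effective divisor on $F_j$ because $F$ is reducible and strongly connected (Lemma~\ref{strongly_conn}). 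Intersecting it with a general very ample curve $C\subset F_j$ yields $F_j\cdot C<0$; since $[C]\in\overline{\mathrm{NE}}(X/B)$ and the cone is spanned by its extremal rays, $F_j$ must be negative on at least one extremal ray $R$.

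I would then take $f_j=\mathrm{cont}_R$ and check it is divisorial with $\mathrm{Exc}\,f_j=F_j$. Any irreducible curve $C$ with $[C]\in R$ has $F_j\cdot C<0$, which forces $C\subseteq F_j$; hence $\mathrm{Locus}(R)\subseteq F_j$. In particular $\mathrm{Locus}(R)\neq X$, so $R$ is not of fibre type, and a smooth threefold admits no small (flipping) extremal contractions, so $R$ is divisorial. Its exceptional divisor is then a prime divisor contained in the irreducible surface $F_j$, hence equals $F_j$. Because $\mathrm{Exc}\,f_j=F_j\subset F=\pi^{-1}(o)$, the morphism $f_j$ is an isomorphism away from the special fibre, so it leaves the generic fibre untouched and is the asserted extremal divisorial Mori contraction over $B$.

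For the ``moreover'' part I would fix $i\neq j$, set $F_i'=f_j(F_i)$, and deduce normality cohomologically. Pushing the ideal-sheaf sequence $0\to\OOO_X(-F_i)\to\OOO_X\to\OOO_{F_i}\to0$ forward by $f_j$ and using $f_{j*}\OOO_X=\OOO_{X_j}$, the term $f_{j*}\OOO_X(-F_i)$ is the ideal sheaf of $F_i'$, so that $\OOO_{F_i'}$ injects into $f_{j*}\OOO_{F_i}$ with cokernel inside $R^1f_{j*}\OOO_X(-F_i)$. Granting the vanishing $R^1f_{j*}\OOO_X(-F_i)=0$, one gets $f_{j*}\OOO_{F_i}=\OOO_{F_i'}$, and since $F_i$ is smooth (Corollary~\ref{ratsurfaces}) the image of a normal variety under a morphism $g$ with $g_*\OOO=\OOO$ is normal, giving that $F_i'$ is normal and that $f_j|_{F_i}\colon F_i\to F_i'$ is a genuine birational contraction. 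Geometrically this is just the contraction of the irreducible double curve $F_i\cap F_j$ (irreducible by Theorem~\ref{Simplex}): either to a point, producing a normal surface singularity à la Grauert, or isomorphically onto the image of $F_j$.

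The main obstacle is precisely the vanishing $R^1f_{j*}\OOO_X(-F_i)=0$, that is, verifying that $-K_X-F_i$ is $f_j$-nef and $f_j$-big so that relative Kawamata--Viehweg vanishing applies to $-F_i=K_X+(-K_X-F_i)$. Writing $(-K_X-F_i)|_{F_j}=-K_{F_j}-\sum_{l\neq j}(F_l\cap F_j)-(F_i\cap F_j)$ and using the log Fano condition of Lemma~\ref{logFano} (which makes $-K_{F_j}-\sum_{l\neq j}(F_l\cap F_j)$ ample) reduces the question to controlling the single boundary curve $F_i\cap F_j$ against this ample class on the fibres of $f_j$; carrying this out uniformly, and in particular ruling out that a multisection double curve is identified to a non-normal point when $F_j$ is contracted onto a curve, is the delicate point. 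If the vanishing resists a uniform proof, I would fall back on the explicit list of divisorial contractions of smooth threefolds, in which $F_j$ is $\mathbb{P}^2$, a quadric, or a ruled surface, and check normality of $F_i'$ case by case.
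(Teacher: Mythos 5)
Your construction of $f_j$ is correct and is essentially the paper's own argument: both proofs produce a curve in $F_j$ on which $F_j$ is negative (the paper intersects an ample curve $C\subset F_j$ with $F_j\equiv-\sum_{i\neq j}F_i$ over $B$; you phrase the same computation via $\mathscr{N}_{F_j/X}$ and Lemma~\ref{strongly_conn}), invoke polyhedrality of $\overline{\mathrm{NE}}(X/B)$ to extract an extremal ray $R$ with $F_j\cdot R<0$, and use the absence of flipping contractions on a smooth threefold to conclude that the contraction of $R$ is divisorial with exceptional divisor exactly $F_j$. Up to that point there is nothing to object to.

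The ``moreover'' part, however, has a genuine gap, and it is exactly the one you flag yourself: the vanishing $R^1f_{j*}\OOO_X(-F_i)=0$ is never established, and the decomposition you propose for it, $-F_i=K_X+(-K_X-F_i)$, cannot work as stated, because $-K_X-F_i$ is in general \emph{not} $f_j$-nef. Concretely, in case~\ref{case:2} of Theorem~\ref{semistable_delpezzo} the component $F_j\simeq\mathbb{P}^2$ is contracted to a $\frac{1}{2}(1,1,1)$-point, so $-K_X|_{F_j}\simeq\OOO_{\mathbb{P}^2}(1)$ while $F_i|_{F_j}$ is a conic; hence $(-K_X-F_i)\cdot l=1-2=-1$ for a line $l\subset F_j$, i.e.\ your divisor is negative on the contracted ray, even though normality of $f_j(F_i)$ does hold there ($\mathbb{F}_4$ with its $(-4)$-section contracted is the cone over a rational normal quartic). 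The missing idea --- and the one that powers the paper's proof --- is to exploit $F\equiv 0$ over $B$, which lets one modify divisors by multiples of $F$ without changing $f_j$-numerical classes. The paper does this at the level of pairs: $K_X+F\equiv_\pi K_X$, so the $K_X$-negative contraction $f_j$ is $(K_X+F)$-negative; since $(X,F)$ is dlt, $(X_j,f_j(F))$ is dlt by \cite[3.44]{KM98}, and every component of the boundary of a dlt pair is normal by \cite[5.52]{KM98}. Your cohomological route can in fact be repaired by the same trick: since $-F_i\equiv_\pi\sum_{k\neq i}F_k$, one gets $-F_i-\bigl(K_X+(1-\epsilon)\sum_{k\neq i}F_k\bigr)\equiv_\pi -K_X-\epsilon F_i$, which for $0<\epsilon\ll1$ is positive on $R$ and hence $f_j$-ample, while $\bigl(X,(1-\epsilon)\sum_{k\neq i}F_k\bigr)$ is klt; relative Kawamata--Viehweg vanishing then gives the required $R^1f_{j*}\OOO_X(-F_i)=0$, and the rest of your pushforward argument goes through. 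But as submitted --- with the vanishing unproven and the fallback case-by-case check not carried out (note that the fallback would itself need the triple point formula, Corollary~\ref{triple_point}, plus an argument that the scheme-theoretic image coincides with the normal Grauert contraction) --- the proposal does not prove the second assertion of the lemma.
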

\begin{proof}
Consider an ample curve $C$ in $F_j$. Then in $\overline{\mathrm{NE}}(X/B)$ one has $$C = a_1 Z_1 + \dots a_n Z_n,\ \ a_i \geq 0$$
where $Z_k$ are extremal curves that span the Mori cone which is polyhedral by the relative logarithmic cone theorem \cite[3.25]{KM98}. Since $C$ is ample and $F_j$ intersects all the other components (see Lemma~\ref{strongly_conn}) one has $$C \cdot F_j = - \sum_{i\neq j} C \cdot F_i < 0.$$ 

On the other hand, $$C\cdot F_j = (a_1 Z_1 + \dots a_n Z_n)\cdot F_j.$$ 
It follows that for some $k$ one has $Z_k \cdot F_j < 0$ and hence all the curves numerically equivalent to $Z_k$ are contained in $F_j$. Since there are no flipping contractions on a smooth threefold, the curves in the class $Z_k$ span $F_j$. Thus, the corresponding contraction (call it $f_j$) is divisorial and $\mathrm{Exc}\ f_j = F_j$. 

We prove the second assertion. Since $f_j$ is $K_X$-negative and over the base $B$, we have that $f_j$ is $(K_X+F)$-negative (recall that $F=0$ over $B$). Put $$X_j=f_j(X).$$ The pair $(X, F)$ is dlt, hence $(X_j, f_j(F))$ is dlt as well by \cite[3.44]{KM98}. By \cite[5.52]{KM98} any component of $f_j(F)$ is normal. The proof is complete. \end{proof}

Notice that the extremal Mori contractions of fiber type are also possible: take a semistable degeneration of $\mathbb{P}^1$ as in Corollary \ref{dim2_case} and multiply it by $\mathbb{P}^1$. Then the projection along $\mathbb{P}^1$ gives an extremal contraction of fiber type.

\begin{thm}[{\cite{Fu90}, \cite{Ka07}}]
\label{semistable_delpezzo}
Let $\pi: X \longrightarrow B \ni o$ be a flat proper family of surfaces over a curve germ such that the special fiber $F =\sum F_i = \pi^{-1}(o)$ is reduced, reducible and has simple normal crossings. Assume that $X$ is smooth, and $-K_X$ is $\pi$-ample. Then there are precisely $6$ possibilities for $F$, and all of them do occur. The generic fiber $X_{\eta}$ and the contractions $f_i$ given by Lemma \ref{contr_surf} are described in the following table.

\begin{center}
\newcounter{NN}
\renewcommand{\theNN}{{\rm\arabic{NN}${}^o$}}
\def\nr{\refstepcounter{NN}{\theNN}}
\begin{tabularx}{0.95\textwidth}{|l |p{0.3\textwidth} | X | l | }
\hline
 & $F_i$ & $f_i$ & $X_\eta$ \\ 
\hline
\nr\label{case:1} & $(\mathbb{P}^2, l)\ \cup (\mathbb{F}_1, s)$ & $f_1(F_1)=$smooth point & $\mathbb{P}^2$ \\ 
 & & $f_2(F_2)=\mathbb{P}^1$ & \\ 
\hline
\nr\label{case:2} & $(\mathbb{P}^2, q)\ \cup (\mathbb{F}_4, s)$ & $f_1(F_1)=$ point of type $\frac{1}{2}(1,1,1)$ & $\mathbb{P}^2$ \\ 
 & & $f_2(F_2)=\mathbb{P}^1$ & \\ 
\hline
\nr\label{case:3} & $(\mathbb{P}^1\times \mathbb{P}^1,\ l_{(1,1)})\ \cup (\mathbb{F}_2, s)$ & $f_1(F_1)=$ordinary double point & $\mathbb{P}^1\times\mathbb{P}^1$ \\ 
 & & $f_2(F_2)=\mathbb{P}^1$ & \\ 
\hline
\nr\label{case:4} & $(\mathbb{F}_1, s)\ \cup (\mathbb{F}_1, h)$ & $f_1(F_1)=\mathbb{P}^1$ & $\mathbb{F}_1$ \\ 
 & & $f_2(F_2)=\mathbb{P}^1$ & \\ 
\hline
\nr\label{case:5} & $(\mathbb{P}^1\times \mathbb{P}^1, l_1)\ \cup (\mathbb{P}^1\times \mathbb{P}^1, l_1)$ & $f_1(F_1)=\mathbb{P}^1$ & $\mathbb{P}^1\times \mathbb{P}^1$ \\ 
 & & $f_2(F_2)=\mathbb{P}^1$ & \\ 
\hline
\nr\label{case:6} & $(\mathbb{F}_1, s \cup f)\ \cup$ & $f_1(F_1)=\mathbb{P}^1$ & $\mathbb{P}^2$ \\ 
 & $(\mathbb{F}_1, s \cup f)\ \cup$ & $f_2(F_2)=\mathbb{P}^1$ & \\ 
 & $(\mathbb{F}_1, s \cup f)$ & $f_3(F_3)=\mathbb{P}^1$ & \\ 
\hline
\end{tabularx}

\

\textit{Table 1. Semistable degenerations of del Pezzo surfaces.}
\end{center}
\end{thm}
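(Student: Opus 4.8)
The plan is to read off the components of $F$ from the Mori contractions of Lemma \ref{contr_surf}, to constrain how they are glued using the triple point formula (Corollary \ref{triple_point}), and to enumerate the resulting configurations. Since $\dim X = 3$ and $F$ is reducible, Theorem \ref{Simplex} gives $\mathcal{D}(F) = \Delta^k$ with $k \in \{1,2\}$, so $F$ has two or three components. Each $F_j$ is a smooth rational surface (it is a component of an snc divisor, and Corollary \ref{ratsurfaces} applies), and, writing $D_j = \sum_{i\neq j} F_i|_{F_j}$, the pair $(F_j, D_j)$ is log del Pezzo. The relation $F \sim 0$ over $B$ restricts to $\mathscr{N}_{F_j/X} \cong \OOO_{F_j}(-D_j)$; this is the identity that couples the abstract surface $F_j$ to its embedding, and I will use it repeatedly.

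First I would pin down the components. By Lemma \ref{contr_surf} each $F_j$ is the exceptional divisor of an extremal divisorial Mori contraction $f_j$ of the smooth threefold $X$, so the classification of such contractions (due to Mori) applies. As $F_j$ is smooth, the quadric-cone case is excluded and only four types remain: $F_j \cong \mathbb{P}^2$ with $\mathscr{N}_{F_j/X} = \OOO(-1)$ (contracted to a smooth point), $F_j \cong \mathbb{P}^2$ with $\mathscr{N} = \OOO(-2)$ (contracted to a $\tfrac{1}{2}(1,1,1)$-point), $F_j \cong \mathbb{P}^1\times\mathbb{P}^1$ with $\mathscr{N} = \OOO(-1,-1)$ (contracted to an ordinary double point), and $F_j \cong \mathbb{F}_n$ (contracted to a curve $\cong \mathbb{P}^1$). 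In every case $\mathscr{N}_{F_j/X}$, hence $D_j = -\mathscr{N}_{F_j/X}$, is determined, and this also supplies the column $f_i$ of Table~1.

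For $k=1$ one has $D_1 = D_2 = C$ with $C = F_1\cap F_2 \cong \mathbb{P}^1$ (Lemma \ref{strongly_conn}, Corollary \ref{dim2_case}). Combining the list above with the log del Pezzo and irreducibility requirements on $C$, the admissible pairs $(F_j, C)$ are exactly $(\mathbb{P}^2, l)$ and $(\mathbb{F}_1, h)$ (both with $C^2 = 1$), $(\mathbb{P}^1\times\mathbb{P}^1, l_{(1,1)})$ ($C^2 = 2$), $(\mathbb{P}^2, q)$ ($C^2 = 4$), $(\mathbb{P}^1\times\mathbb{P}^1, l_1)$ ($C^2 = 0$), and $(\mathbb{F}_n, s)$ ($C^2 = -n$). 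With no triple points the formula of Corollary \ref{triple_point} reads $C|_{F_1}^2 + C|_{F_2}^2 = 0$, so the two components must carry opposite self-intersections. Matching $1$ with $-1$ gives cases \ref{case:1} and \ref{case:4}, $4$ with $-4$ gives case \ref{case:2}, $2$ with $-2$ gives case \ref{case:3}, and $0$ with $0$ gives case \ref{case:5}; since no log del Pezzo boundary has $C^2 = 3$ (a plane cubic is not rational) or $C^2 \geq 5$, the surfaces $\mathbb{F}_3$ and $\mathbb{F}_n$ with $n \geq 5$ have no partner and are discarded, leaving precisely these five.

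For $k=2$ there is a unique triple point, and on each $F_j$ the boundary $D_j = C_{jk} + C_{jl}$ is a sum of two rational curves meeting there; in particular $-\mathscr{N}_{F_j/X}$ must be reducible, which already excludes $\mathbb{P}^2$ with $\OOO(-1)$. The triple point formula now gives $C|_{F_j}^2 + C|_{F_k}^2 = -1$ on each of the three edges. I would run this constraint around the triangle: the boundary curves of $\mathbb{P}^2$ (two lines, $C^2 = 1$) and of $\mathbb{P}^1\times\mathbb{P}^1$ (two rulings, $C^2 = 0$) cannot be made consistent on all three edges once one remembers that each $\mathbb{F}_n$ has a single negative section, so every component must be some $(\mathbb{F}_n,\, s\cup f)$; the section-to-fiber gluing forced by $-n + 0 = -1$ then pins $n=1$ on every edge, yielding case \ref{case:6}. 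This cyclic consistency check is the step I expect to be the main obstacle, since it requires tracking which edge is the section and which the fiber on each of the three surfaces simultaneously. Finally, for the converse each configuration is $d$-semistable by Lemma \ref{d-semistability_lemma} (on $C\cong\mathbb{P}^1$ the vanishing of the total degree forces the normal-bundle product in \eqref{dss} to be trivial), so Tziolas's smoothing theorem \cite{Tz15} realises it as the special fiber of a semistable family; the generic fiber $X_\eta$ in the last column is then identified by performing one contraction $f_j$ to reach a smooth family whose special fiber is the named surface.
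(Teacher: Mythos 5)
Your proposal is correct in substance and, for the classification half, runs on the same engine as the paper's proof: Lemma \ref{contr_surf}, Mori's list of divisorial contractions with smooth exceptional divisor (types $(\mathrm{E_1})$, $(\mathrm{E_2})$, $(\mathrm{E_3})$, $(\mathrm{E_5})$), the simplex structure from Theorem \ref{Simplex}, and the triple point formula \eqref{triple_formula}. The differences are organizational and in the realizability half. The paper cases on the contraction type (first a component not of type $(\mathrm{E_1})$, then all components of type $(\mathrm{E_1})$), and in the two-component $(\mathrm{E_1})$ case identifies $F_2$ with a generic fiber of the contracted family $X_1$, which pins down $X_\eta$ at the same time; you instead fix the number of components via Theorem \ref{Simplex} and enumerate matchings of boundary self-intersections, which is equivalent and somewhat more mechanical. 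Your counting argument for the triangle (every edge needs a side of negative self-intersection, each Hirzebruch surface owns exactly one negative section, so every edge is a section-to-fiber gluing forcing $n=1$) is a legitimate packaging of what the paper does separately in the cases $(\mathrm{E_5})$, $(\mathrm{E_3})$ and the three-component $(\mathrm{E_1})$ case, and it does close; it is not actually the obstacle you feared. For existence, the paper blows up a curve in a trivial family $S\times\mathbb{A}^1$, while you invoke the smoothing theorem of \cite{Tz15}; both work, but the explicit blow-up is elementary and also hands over $X_\eta$ and the triviality of the monodromy for free.

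Three repairs you should make. First, ``each configuration is $d$-semistable by Lemma \ref{d-semistability_lemma}'' is stated backwards: that lemma deduces \eqref{dss} from the existence of the family, which is exactly what you are trying to produce. The correct statement (which your parenthetical essentially contains) is that \eqref{dss} can be verified directly on each double curve, since the line bundle in question has degree $C|_{F_i}^2+C|_{F_j}^2+n_C=0$ on $C\simeq\mathbb{P}^1$ by construction. Moreover, \eqref{dss} is only a corollary of Friedman's $d$-semistability, which is what \cite{Tz15} requires; for the two-component cases the two conditions coincide (the double locus is a single smooth curve), but in case \ref{case:6} the double locus is three lines through the triple point, and you must add that a line bundle on this cycle-free configuration is trivial once its restriction to each component is trivial. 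Second, the normal bundle is \emph{not} determined by the contraction type in case $(\mathrm{E_1})$; only its degree $-1$ on the rulings is. So your list of admissible pairs needs the justification you only gesture at: the log del Pezzo condition on $(\mathbb{F}_n, C)$ with $C$ a section forces $C=s$, or $C=h$ on $\mathbb{F}_1$, or a ruling or diagonal on $\mathbb{P}^1\times\mathbb{P}^1$ (your aside about plane cubics is not the right reason that $C^2=3$ is impossible; the reason is that no such pair appears in this list). Third, in case \ref{case:6} a single contraction does not yield a smooth family: contracting one component produces the configuration of case \ref{case:1}, and a second contraction is needed before the deformation-rigidity argument identifies $X_\eta\simeq\mathbb{P}^2$.
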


\begin{proof}
By Lemma \ref{contr_surf} for each component $F_i$ of the special fiber $F$ there exists an extremal divisorial Mori contraction $f_i: X \longrightarrow X_i$ such that $\mathrm{Exc}\ f_i = F_i$. We use the classification of such contractions \cite{Mo82} and the fact that the components are smooth and rational (see Corollary~\ref{ratsurfaces}) to obtain the following possibilities for $F_i$ and $f_i(F_i)$:
\begin{itemize}[leftmargin=*]
\item
\textbf{Type $(\mathrm{E_1})$.}
$F_i\simeq \mathbb{F}_n$, $n\geq 0$, $f_i(F_i)=\mathbb{P}^1$. 
\item
\textbf{Type $(\mathrm{E_2})$.} $F_i\simeq \mathbb{P}^2$, $f_i(F_i)=\ $smooth point, $-K_X|_{F_i} \simeq \OOO_{\mathbb{P}^2}(2)$;
\item
\textbf{Type $(\mathrm{E_3})$.} $F_i\simeq \mathbb{P}^1\times \mathbb{P}^1$, $f_i(F_i)=\ $ordinary double point, $-K_X|_{F_i} \simeq \OOO_{\mathbb{P}^1\times \mathbb{P}^1}(1,1)$;
\item
\textbf{Type $(\mathrm{E_5})$.} $F_i\simeq \mathbb{P}^2$, $f_i(F_i)=\ $singular point of type $\frac{1}{2}(1,1,1)$, $-K_X|_{F_i} \simeq \OOO_{\mathbb{P}^2}(1)$.
\end{itemize}
Now we consider these cases in detail. First, we assume that there exists a component, say $F_1\subset F$,
which is not of type $(\mathrm{E_1})$.


\subsection*{Case $(\mathrm{E_2})$.}
Then
$$F_1 \simeq \mathbb{P}^2,\qquad -K_X|_{F_1} \simeq \OOO_{\mathbb{P}^2}(2), \qquad \sum_{j\geq 2} F_j |_{F_1} \simeq \OOO_{\mathbb{P}^2}(1).$$ 
The last equality follows from the adjunction formula. In this case $f_1$ contracts~$F_1$ to a smooth point, and by Lemma \ref{strongly_conn} only one component $F_2$ intersects $F_1$ (the intersection is a curve). Thus, $F=F_1+F_2$, and there are no triple point on $F_1\cap F_2$. By the Triple point formula~\eqref{triple_formula} the restriction $F_1|_{F_2}$ is a $(-1)$-curve. By the classification given above $F_2$ is a Hirzebruch surface $\mathbb{F}_1$. This is the case~\ref{case:1} of the theorem. 

\subsection*{Case $(\mathrm{E_5})$.}
Then
$$F_1 \simeq \mathbb{P}^2, \qquad -K_X|_{F_1} \simeq \OOO_{\mathbb{P}^2}(1), \qquad \sum_{j\geq 2} F_j |_{F_1} \simeq \OOO_{\mathbb{P}^2}(2).
$$ 
Assume that $\sum_{j \geq 2} F_j |_{F_1}$ is a smooth conic. As in the previous case one has $F=F_1+F_2$. By the Triple point formula~\eqref{triple_formula} one has $F_1|_{F_2}^2=-4$. Hence $F_2$ is a Hirzebruch surface $\mathbb{F}_4$. This is the case~\ref{case:2} of the theorem. 

Now assume that $\sum_{j\geq 2}F_j |_{F_1}$ is a union of two distinct lines (a double line cannot occur since $F$ is snc on $X$). By Lemma \ref{strongly_conn} one has $F=F_1+F_2+F_3$. By the Triple point formula~\eqref{triple_formula} we have $F_1|_{F_2}^2 = F_1|_{F_3}^2= -2$. Thus, $F_2\simeq F_3 \simeq \mathbb{F}_2$, the curves~$F_2|_{F_1}$ and~$F_3|_{F_1}$ are the negative sections. Then $F_2\cap F_3$ has non-negative self-intersection on $F_2$ and $F_3$. But there is one triple point on $F_2\cap F_3$. This contradicts the Triple point formula~\eqref{triple_formula} for $F_2\cap F_3$.

\subsection*{Case $(\mathrm{E_3})$.}
Then
$$
F_1 \simeq \mathbb{P}^1\times \mathbb{P}^1,\qquad -K_X|_{F_1} \simeq \OOO_{\mathbb{P}^1\times \mathbb{P}^1}(1,1), \qquad \sum_{j\geq 2} F_j |_{F_1} \simeq \OOO_{\mathbb{P}^1\times \mathbb{P}^1}(1,1)
$$ 
and $F_1$ gets contracted to an ordinary double point. Suppose that $\sum_{j\neq 2} F_j |_{F_1}$ is irreducible. Similiar to the cases $2$ and $3$ one has $F=F_1+F_2$, $F_2\simeq \mathbb{F}_2$ and $F_1|_{F_2}$ is a negative section on $F_2$. This is the case~\ref{case:3} of the theorem.

Now assume that $\sum_{j\geq 2} F_j |_{F_1}$ is a union of two distinct lines $l_1\cup l_2$. Then, as in the case $2$, one has $F=F_1+F_2+F_3$. One checks that $F_2$ and $F_3$ are isomorphic to $\mathbb{F}_1$ and $F_2\cap F_3$ has non-negative self-intersection on $F_2$ and $F_3$. This contradicts the Triple point formula~\eqref{triple_formula} for $F_2\cap F_3$.
 
\ 
 
From now on we assume that all the contractions $f_i$ are of type $(\mathrm{E_1})$.
Hence, any component $F_i\subset F$ is a Hirzebruch surface. We show that the general ruling $h_i$ on $F_i$ (which is a fiber of $f_i$) intersects precisely one other component. Indeed, one has
\begin{equation}
\label{ruling_formula}
0=\sum F_j \cdot h_i= F_i \cdot h_i + \sum_{j\neq i} F_j \cdot h_i
\end{equation} 
and $F_i \cdot h_i = -1$ since $f_i(F_i)$ is a smooth curve. By Theorem \ref{Simplex} the number of components is either $2$ or $3$. 

\subsection*{Case: $F$ has two components. }
Thus $F=F_1+F_2$. Notice that $C=F_1 \cap F_2\simeq\mathbb{P}^1$, and by \eqref{ruling_formula} the curve $F_1 \cap F_2$ is a section on $F_1$ and $F_2$. 
Consider the contraction~$f_1$. Notice that $f_1(C)$ is not a point. Hence, $F_1$ is the blow-up of a smooth curve $f_1(F_2)\subset X_1=f_1(X)$, and $F_2$ is isomorphic to a generic fiber of the family $X_1$ (by Lemma \ref{contr_surf} we have that $f_1(F_2)$ is normal and hence $f_1(F_2)\simeq F_2$). Since the generic fiber is a del Pezzo surface one has $F_2\simeq \mathbb{F}_1$ or~$\mathbb{P}^1 \times \mathbb{P}^1$. The same argument works for the contraction $f_2$. It follows that $F_1\simeq F_2$. Using the fact that $C$ is a section on $F_1$ and $F_2$ and the Triple point formula~\eqref{triple_formula} one obtains the cases~\ref{case:4} and~\ref{case:5} of the theorem.
 
\subsection*{Case: $F$ has three components. }
Thus $F=F_1+F_2+F_3$ where each component $F_i$ is isomorphic to a Hirzebruch surface. The dual complex to $F$ is a triangle. 
By \eqref{ruling_formula} the boundary $(F_2 + F_3)|_{F_1}$ is a union of a fiber and a section on $F_1$, and similarly for $F_2$ and $F_3$. Consider the ruling $h_1$ on $F_1$ that is contained in another component, say, $F_2$. Then by the Triple point formula~\eqref{triple_formula} on $F_2$ this is a $(-1)$-curve. Hence, $F_2\simeq\mathbb{F}_1$. Similarly, $F_1\simeq F_3 \simeq \mathbb{F}_1$. Notice that $f_1(F_2)\simeq\mathbb{P}^2$, and $f_1(X)$ is the family as in the case~\ref{case:1} of the theorem. Thus we obtain the case~\ref{case:6} of the theorem.

Notice that in each case the family can be obtained as the blow-up of a smooth family $\mathrm{pr_2}: S \times \mathbb{A}^1 \longrightarrow \mathbb{A}^1$ where $S$ is a suitable del Pezzo surface. Hence all the cases occur. The proof is complete. 
\end{proof}

In all the above cases the monodromy around the special fiber is trivial since there is a sequence of contractions that leads to a smooth family of del Pezzo surfaces. 

All the components in each case are toric surfaces. However, the degeneration is toric (by which we mean that the intersections are toric curves on the components) only in the cases $1,4,5,6$. Also notice that the classification of semistable degenerations of del Pezzo surfaces can be obtained (however, without the knowledge about the contractions $f_i$) using Theorem \ref{Simplex}, the Triple point formula~\eqref{triple_formula} and the classification of log del Pezzo surfaces:

\begin{thm}[{\cite[§3]{Ma83}}]
\label{log_delPezzo}
Any log del Pezzo surface is isomorphic to one of the following: $$(\mathbb{P}^2, l),\ (\mathbb{P}^2, l\cup l'), \ (\mathbb{P}^2, q), \ (\mathbb{P}^1\times \mathbb{P}^1, l_{(1,1)}), \ (\mathbb{F}_1, h), \ (\mathbb{F}_n, s), \ n\geq 0, \ (\mathbb{F}_n, s\cup f), \ n\geq 0.$$ Conversely, any such surface is log del Pezzo.
\end{thm}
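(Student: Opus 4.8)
The statement has two directions; the converse (that each listed pair is log del Pezzo) is a direct computation of intersection numbers, so I would focus on the classification, treating reduced non-empty boundary $D$ as in the list. First I would establish the rationality of $X$ and the local shape of $D$. Since $-K_X-D$ is ample and $D\geq 0$, the pair $\bigl(X,(1-\epsilon)D\bigr)$ is klt log Fano for $0<\epsilon\ll1$, so $X$ is rationally connected and hence, being a smooth surface, rational, exactly as in Corollary \ref{ratsurfaces}. Writing $A=-K_X-D$ and $D=\sum D_i$, adjunction on the smooth curve $D_i$ gives $A\cdot D_i=2-2g(D_i)-\sum_{j\neq i}D_i\cdot D_j$; since the left-hand side is a positive integer we get $g(D_i)=0$ and $\sum_{j\neq i}D_i\cdot D_j\leq1$. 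Thus each component is a $\mathbb{P}^1$ meeting the rest of $D$ in at most one point, with $A\cdot D_i=2$ when $D_i$ is disjoint from the other components and $A\cdot D_i=1$ when it meets exactly one of them; so $D$ is a disjoint union of isolated rational curves and of transverse pairs of rational curves.

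The engine of the classification is the behaviour of $(-1)$-curves. If $E$ is a $(-1)$-curve with $E\not\subset D$, then $A\cdot E=1-D\cdot E>0$ forces $D\cdot E=0$; hence every $(-1)$-curve is either a component of $D$ or is disjoint from $D$. Contracting any such $E$ via $g\colon X\to X_1$ preserves the log del Pezzo property: with $B:=g_*A$ one has $g^*B=A+(A\cdot E)E$, so $B^2=A^2+(A\cdot E)^2>0$ and $B\cdot C_1=A\cdot C+(A\cdot E)(E\cdot C)>0$ for the strict transform $C$ of any curve $C_1\subset X_1$, whence $B=-K_{X_1}-g_*D$ is ample by Nakai--Moishezon. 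One checks that the combinatorial structure above is inherited by $g_*D$.

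The main point, and the step I expect to be the hardest, is the bound $\rho(X)\leq2$, which I would prove by induction on $\rho$. If $\rho(X)\geq3$, contract a $(-1)$-curve to a log del Pezzo pair $(X_1,D_1)$ with $\rho(X_1)=2$; by the $\rho=2$ case $X_1=\mathbb{F}_n$ and $A_1:=-K_{X_1}-D_1$ satisfies $A_1\cdot f=1$ on every ruling $f$. Writing the blow-up back as $g\colon X\to X_1$ at the point $p$ and using that the contracted curve met the rest of $D$ in at most one point, a short computation gives $A\cdot\widetilde f=A_1\cdot f+(\mathrm{mult}_p D_1-\delta-1)\leq A_1\cdot f-1=0$ for the strict transform $\widetilde f$ of the ruling through $p$; here $\delta\in\{0,1\}$ records whether the exceptional curve lies in $D$, and $\mathrm{mult}_p D_1\leq1$ by the pairing constraint. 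This contradicts ampleness of $A$, so $\rho(X)\leq2$. The delicate part is precisely keeping track of whether the exceptional curve enters the boundary, which is exactly where the dichotomy $A\cdot D_i\in\{1,2\}$ is used.

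It then remains to enumerate the boundaries when $\rho(X)\leq2$. On $\mathbb{P}^2$, ampleness of $\OOO_{\mathbb{P}^2}(3)-D$ forces $\deg D\leq2$, giving $(\mathbb{P}^2,l)$, $(\mathbb{P}^2,l\cup l')$ and $(\mathbb{P}^2,q)$. On $\mathbb{F}_n$ one has $-K_{\mathbb{F}_n}=2s+(n+2)f$, and testing ampleness of $2s+(n+2)f-D$ against a fibre $f$ and the negative section $s$ (the case $n=0$ being symmetric in the two rulings) for each admissible configuration yields $(\mathbb{F}_n,s)$ for all $n\geq0$, $(\mathbb{F}_n,s\cup f)$ for all $n\geq0$, the positive section $(\mathbb{F}_1,h)$ (ampleness failing once $n\geq2$), and on $\mathbb{F}_0=\mathbb{P}^1\times\mathbb{P}^1$ the curve $(\mathbb{P}^1\times\mathbb{P}^1,l_{(1,1)})$; the identifications $h=s$ and the interchange of the two rulings at $n=0$ remove the apparent duplications. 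These are exactly the pairs of the list, and the same intersection computations establish the converse.
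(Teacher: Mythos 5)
The paper does not prove Theorem \ref{log_delPezzo}: it is quoted from Maeda \cite{Ma83} and used as a black box, so there is no proof of record to compare yours against line by line. Your proposal is a legitimate self-contained alternative, and the route is sound: rationality of $X$ via the klt log Fano trick and \cite{Zh06} (the same device as Corollary \ref{ratsurfaces}), the adjunction bound $A\cdot D_i=2-2g(D_i)-\sum_{j\neq i}D_i\cdot D_j\geq 1$ forcing rational components meeting the rest of $D$ in at most one point, preservation of the log del Pezzo property under contraction of a $(-1)$-curve via $g^*B=A+(A\cdot E)E$ and Nakai--Moishezon, the resulting bound $\rho(X)\leq 2$, and a direct enumeration on $\mathbb{P}^2$ and $\mathbb{F}_n$; I have checked these computations and they are correct. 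What the elementary approach buys is independence from Maeda's extremal-ray analysis; what the citation buys the paper is brevity and uniformity, since the paper must in any case invoke Maeda's three-dimensional classification in Proposition \ref{maximal_dim3}, for which no comparably short argument is available. You were also right to read the statement with reduced (integral) boundary: under the paper's Definition \ref{logFano}, which allows fractional coefficients, the statement as written would be false (e.g. $(\mathbb{P}^2,\tfrac{1}{2}l)$), and the integral reading is the one intended in the introduction and in \cite{Ma83}.

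Two slips in the $\rho(X)\leq 2$ step need patching, though neither threatens the argument. First, ``$A_1\cdot f=1$ on every ruling'' fails for exactly one pair in your $\rho=2$ list, namely $(\mathbb{F}_0,s)=(\mathbb{P}^1\times\mathbb{P}^1,l_1)$, for which $A_1=l_1+2l_2$ and the ruling class $l_1$ has $A_1\cdot l_1=2$; the fix is to choose, among the rulings through the blown-up point $p$, one whose class satisfies $A_1\cdot f=1$, and such a choice exists in every case. Second, your induction step tacitly assumes that the boundary survives the contraction: if $D=E$ is a single $(-1)$-curve, then $g_*D=0$, the pair $(X_1,g_*D)$ has empty boundary, and ``the $\rho=2$ case'' cannot be quoted. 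This case must be handled separately, and it is easy: $X_1$ is then a del Pezzo surface with $\rho=2$, hence $\mathbb{F}_1$ or $\mathbb{P}^1\times\mathbb{P}^1$, so $-K_{X_1}\cdot f=2$ for any ruling, while $\delta=1$ and $\mathrm{mult}_p D_1=0$ give $A\cdot\widetilde{f}=2+0-1-1=0$, again contradicting ampleness. With these two repairs your proof is complete.
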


\section{An example}
By Theorem \ref{semistable_delpezzo} for a semistable family of del Pezzo surfaces $\pi: X \longrightarrow B\ni o$ there is a sequence of contractions of the components of the special fiber $F$ that leads to a smooth family. The following example shows that this is not the case in dimension $3$. Instead, we need to make some flips first.

\begin{exam}
Let $F_1 = \mathrm{Bl}_{p_1, p_2}\ \mathbb{P}^3$ be the blow-up of $\mathbb{P}^3$ in two points. Denote its exceptional divisors by $\Pi_1, \Pi_2$. Let $D_1$ be the strict transform of a plane in $\mathbb{P}^3$ containing $p_1$ and $p_2$. One has $\mathscr{N}_{D_1/F_1}=\phi^*\OOO_{\mathbb{P}^2} (1)\otimes \OOO_{D_1}(-e_1-e_2)$ where $\phi$ is the blow-down morphism from $D_1$ to a plane and $e_1, e_2$ are two $\phi$-exceptional curves. One checks that~$F_1$ is log Fano with respect to the boundary $D_1$. 

Let $F_2=\mathbb{P}_{\mathbb{P}^1\times\mathbb{P}^1}(\EEE)$ where $\EEE$ is rank $2$ vector bundle such that $c_1(\EEE)=0$ and $c_2(\EEE)=1$. Such $\EEE$ exists due to \cite[8.1.2]{Ma83}, and $F_2$ is log Fano with respect to the boundary $D_2$, the tautological divisor of $F_2$. 

One has $\mathscr{N}_{D_1/F_2}=\OOO_{D_2}(e_3)=\mathscr{N}_{D_2/F_1}^{-1}$ where $e_3$ is the ``central'' $(-1)$ curve on $D_1\simeq D_2$ which is a del Pezzo surface of degree $7$. Thus, by \cite{Tz15} the simple normal crossing Fano variety $F_1\cup F_2$ can be smoothed in a semistable family. 

One can show that any birational contraction of $X$ over the base $B$ which does not change the generic fiber $X_\eta$ does not lead to a smooth family. However, after making a flip in the union of two planes $\Pi_1, \Pi_2$ the strict transform $F'_1\simeq \mathbb{P}^3$ of the component $F_1$ can be contracted to a smooth point. 

After the contraction the strict transform $F''$ of $F_2$ is a smooth threefold of degree $(-K_{F''})^3=48$ and $\rho(F'')=2$. Hence (see \cite[Chapter 12]{IP99}), the generic fiber of the family is isomorphic to~$V_6$, that is, a divisor of bidigree $(1,1)$ on $\mathbb{P}^2\times \mathbb{P}^2$. 

Conversely, starting from a smooth family whose fibers are isomorphic to $V_6$, we may blow a point in a fiber, then make an antiflip in two copies of $\mathbb{P}^1$ and obtain the family that we started from. 
\end{exam}

It is interesting to know whether analogous construction works for any semistable family of Fano varieties, see Problem \ref{question2}.

\section{Maximal degeneration}
\label{section_maximal_degeneration}
We work in the assumption~\ref{notation} of Section \ref{section_prelim}. Consider the maximal degeneration in dimension $3$, that is, the dual complex $\mathcal{D}(F)$ is a $3$-dimensional simplex (see Theorem~\ref{Simplex}). Hence, $F=F_1+F_2+F_3+F_4$. Put $D_{ij}=F_i\cap F_j$. It turns out that such a degeneration is unique and can be explicitly described. In the next theorem we use notation of Section \ref{section_notation}. By a \emph{toric degeneration} we mean the following: all the components of the special fiber are toric varieties, and all the intersections of the components are torus-invariant. 

\begin{proposition}
\label{maximal_dim3}
In the above assumptions each component $\bigl(F_i,\ \sum_{j\neq i} F_j|_{F_i}\bigr)$ is isomorphic to 
$$\Bigl(\mathbb{P}_{\mathbb{F}_1}(\OOO_{\mathbb{F}_1}(-s-f)\oplus\OOO_{\mathbb{F}_1}),\ H\cup M_s\cup M_f\Bigr).$$ 
The boundary components are as follows: 
$$H \simeq \mathbb{F}_1, \qquad \ M_s \simeq \mathbb{P}^1\times \mathbb{P}^1, \qquad \ M_f \simeq \mathbb{F}_1.$$ 
Equivalently $F_i$ is the blow-up of $\mathbb{P}^3$ in a flag of subspaces $\{\mathrm{pt}\} =\mathbb{P}^0 \subset \mathbb{P}^1$. The degeneration is toric.
\end{proposition}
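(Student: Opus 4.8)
The plan is to fix an arbitrary component and recognise it as a smooth log Fano threefold whose boundary is a triangle of surfaces, and then to match it against the classification of such threefolds. By Theorem \ref{Simplex} with $n=3$ we have $\mathcal{D}(F)=\Delta^3$, so $F=F_1+F_2+F_3+F_4$ and every stratum occurs once: six double surfaces $D_{ij}=F_i\cap F_j$, four triple curves $D_{ijk}=F_i\cap F_j\cap F_k\simeq\mathbb{P}^1$, and the quadruple point $D_{1234}$. For fixed $i$ the strata meeting $F_i$ show that the boundary $D_i=\sum_{j\neq i}F_j|_{F_i}=D_{ij}+D_{ik}+D_{il}$ is a reduced snc divisor with three smooth components, meeting pairwise along the triple curves and all three through $D_{1234}$, so the dual complex of $D_i$ inside $F_i$ is a triangle. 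By the results of Section \ref{section_prelim}, $(F_i,D_i)$ is a smooth log Fano threefold and each $D_{ij}$ is a log del Pezzo surface. The discussion is identical for every $i$, so it is enough to identify one $(F_i,D_i)$ together with the normal data gluing it to its neighbours.

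First I would constrain the three boundary surfaces numerically. Each $D_{ij}$ has two-component boundary $D_{ijk}+D_{ijl}$ (the triple curves through $D_{1234}$), so by Theorem \ref{log_delPezzo} it is $(\mathbb{P}^2,l\cup l')$ or $(\mathbb{F}_n,s\cup f)$. The main tool is the relation $\sum_m F_m=0$ over $B$: intersecting with a triple curve $C=D_{ijk}$, using the transverse point $F_l\cdot C=1$ at $D_{1234}$ together with $F_i\cdot C=C^2_{D_{jk}}$, $F_j\cdot C=C^2_{D_{ik}}$, $F_k\cdot C=C^2_{D_{ij}}$ (each obtained by restricting $F_m$ to the double surface through $C$ whose indices avoid $m$), gives the triple-curve identity
\[
C^2_{D_{ij}}+C^2_{D_{ik}}+C^2_{D_{jk}}=-1,
\]
the four-dimensional analogue of Corollary \ref{triple_point}. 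A boundary curve of $(\mathbb{F}_n,s\cup f)$ has self-intersection $-n\le 0$ or $0$, while a line on $\mathbb{P}^2$ has self-intersection $+1$; applying the identity to the $(-n)$-section shows that a face $\mathbb{F}_n$ with $n\ge 2$ can occur only when it is adjacent, along a triple curve, to a $\mathbb{P}^2$ face. The whole problem thus reduces to excluding faces of type $(\mathbb{P}^2,l\cup l')$.

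This exclusion, and the identification of $F_i$ itself, is where I would invoke the classification of smooth three-dimensional log Fano varieties with integral boundary from \cite{Ma83}. Together with the normal-bundle constraint of Lemma \ref{d-semistability_lemma}, which for a face reads $\mathscr{N}_{D_{ij}/F_i}\otimes\mathscr{N}_{D_{ij}/F_j}\simeq\OOO_{D_{ij}}(-D_{ijk}-D_{ijl})$, the list admits no triangle boundary containing a $\mathbb{P}^2$, so every face is $\mathbb{F}_0$ or $\mathbb{F}_1$; the identity then makes each triple curve the negative section of exactly one of the faces through it and a ruling in the other two. Matching $(F_i,D_i)$ against the remaining members of the list singles out the threefold $\mathbb{P}_{\mathbb{F}_1}(\OOO_{\mathbb{F}_1}(-s-f)\oplus\OOO_{\mathbb{F}_1})$, whose boundary is $\{\mathbb{P}^1\times\mathbb{P}^1,\mathbb{F}_1,\mathbb{F}_1\}$; in particular the two $\mathbb{P}^1\times\mathbb{P}^1$ faces occur on an opposite pair of edges of the tetrahedron.

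It then remains to read off the data and conclude. Restricting the projectivised bundle: the tautological section gives $H\simeq\mathbb{F}_1$; over the negative section the summand $\OOO_{\mathbb{F}_1}(-s-f)$ has degree $(-s-f)\cdot s=0$, so $M_s\simeq\mathbb{P}^1\times\mathbb{P}^1$; over a fiber its degree is $(-s-f)\cdot f=-1$, so $M_f\simeq\mathbb{F}_1$. A direct fan computation identifies this threefold with the blow-up of $\mathbb{P}^3$ along a flag $\mathbb{P}^0\subset\mathbb{P}^1$, exhibiting it as toric with $D_i$ part of its toric boundary, so the whole degeneration is toric. Existence is provided by the explicit blow-up construction following Theorem \ref{Simplex} (and \cite{Hu06}), which realises a maximal degeneration of exactly this shape, and uniqueness is what the classification gives, so the two coincide. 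The main obstacle is precisely the exclusion of the $\mathbb{P}^2$ faces and the global gluing rigidity behind it: each double surface is shared by two components, and the normal bundle of $D_{ij}$ inside $F_i$ must be inverse, up to the twist of Lemma \ref{d-semistability_lemma}, to the one inside $F_j$, simultaneously for all six surfaces and compatibly around the four triple curves; it is this rigidity, read through the log Fano list, rather than the triple-curve identity alone, that pins down the unique component.
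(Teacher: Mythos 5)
Your strategy coincides with the paper's: view each $\bigl(F_i,\ \sum_{j\neq i}F_j|_{F_i}\bigr)$ as a smooth log Fano threefold with snc boundary, run it against Maeda's classification \cite{Ma83}, and cut down the possibilities with the $d$-semistability relation of Lemma \ref{d-semistability_lemma}. Your triple-curve identity $C^2_{D_{ij}}+C^2_{D_{ik}}+C^2_{D_{jk}}=-1$ is correct and is a genuinely useful supplement (the paper never states this four-dimensional analogue of Corollary \ref{triple_point}), and the reduction to excluding faces of type $(\mathbb{P}^2, l\cup l')$ is sound. But there is a real gap exactly where the work lies: the assertion that ``the list admits no triangle boundary containing a $\mathbb{P}^2$'' is false as a statement about the classification. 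Maeda's list contains $\mathbb{P}^3$ with three planes (the paper's case $(\mathrm{F})$), $\mathbb{P}_{\mathbb{P}^2}(\OOO_{\mathbb{P}^2}(-a)\oplus\OOO_{\mathbb{P}^2})$ with boundary $\mathbb{P}^2+\mathbb{F}_a+\mathbb{F}_a$ (cases $(\mathrm{E_2})$ and $(\mathrm{C_2})_1$), and $\mathbb{P}_{\mathbb{P}^1}(\OOO\oplus\OOO(a_1)\oplus\OOO(a_2))$ with a $\mathbb{P}^2$ face (case $(\mathrm{D_3})$): all are log Fano threefolds whose boundary is a triangle containing a $\mathbb{P}^2$. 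Nor does Lemma \ref{d-semistability_lemma} applied to a single face kill them: $d$-semistability only dictates the normal bundle on the other side of the face, and in several cases some member of the list does realize it --- for instance when $F_1\simeq\mathbb{P}^3$ the neighbours can be of Maeda's type 4.1 with $a=-3$, and the contradiction only appears one step further, on the face $D_{23}\simeq\mathbb{F}_3$ between two neighbours. The exclusion is thus a propagation argument across several components, carried out case by case; these computations occupy essentially the whole of the paper's proof and are replaced in yours by the (incorrect as stated) assertion.

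The same gap recurs at the identification stage. Once all faces are $\mathbb{F}_0$ or $\mathbb{F}_1$, the surviving candidates of type $(\mathrm{C_2})_2$ still form a three-parameter family $\mathbb{P}_{\mathbb{F}_n}(\OOO_{\mathbb{F}_n}(-ks-(kn+m)f)\oplus\OOO_{\mathbb{F}_n})$, and pinning down $n=k=1$, $m=0$ simultaneously for all four components again requires the cross-component $d$-semistability computations (the paper's \eqref{dss_c2_2_1}--\eqref{dss_c2_2_3}, including the elimination of the $n=0$ solution, where the contradiction only surfaces on $D_{24}\simeq D_{34}\simeq\mathbb{P}^1\times\mathbb{P}^1$); there is also the wrinkle, invisible from your vantage point, that the paper must correct the computations of \cite[8.3, 8.4]{Ma83} before using them. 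Your closing paragraph rightly identifies this gluing rigidity as the main obstacle, but the proposal never carries it out: ``matching $(F_i,D_i)$ against the list'' conceals precisely the case analysis that constitutes the proof.
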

\begin{proof}
Consider a component of such degeneration, say $F_1$, and its boundary $D=D_{12}+D_{13}+D_{14}$. According to the classification \cite{Ma83}, the following cases are possible for the pair~$(F_1,D)$:
\subsection*{Case $(\mathrm{F})$}
\label{max_f}
Let $F_1$ admit an extremal contraction of type $(\mathrm{F})$, that is, $F_1$ is a Fano variety with $\rho(F_1)=1$. Then by \cite[6.1]{Ma83}
\begin{equation*}
\begin{split}
F_1\simeq \mathbb{P}^3, &\qquad D_{1i} \sim H, \qquad D_{1i} \simeq \mathbb{P}^2, \\ 
&\mathscr{N}_{D_{1i}/F_1}=\OOO_{\mathbb{P}^2}(1), \qquad 2\leq i \leq 4.
\end{split}
\end{equation*}
From the d-semistability condition if follows that 
$\mathscr{N}_{D_{1i}/F_i}=\OOO_{\mathbb{P}^2}(-3)$. It follows that the components~$F_2$, $F_3$ and $F_4$ are of type 4.1 with $a=-3$. But then the d-semistability condition fails, for example, for $D_{23}\simeq \mathbb{F}_3$. Hence, we may assume that none of the components has this type. 

\subsection*{Case $(\mathrm{E_2}$)}
\label{max_e2}
Let $F_1$ admit an extremal contraction of type $(\mathrm{E_2})$, that is, a contraction of $\mathbb{P}^2$ to a smooth point. Then by \cite[7.2]{Ma83}
\begin{equation*}
\begin{split}
F_1=\mathbb{P}_{\mathbb{P}^2}(\OOO_{\mathbb{P}^2}(-1)&\oplus\OOO_{\mathbb{P}^2}), \\
D_{12}\sim H, \qquad \ &D_{13}\sim M, \qquad \ D_{14}\sim M, \\
D_{12}&\simeq\mathbb{P}^2, \qquad \ D_{13} \simeq\mathbb{F}_1, \qquad \ D_{14} \simeq\mathbb{F}_1.
\end{split}
\end{equation*}
The variety $F_1$ and the boundary components are the same as in the case $\mathrm{(C_2)_1}$ below for $a=1$.

\subsection*{Case $(\mathrm{D_3})$}
\label{max_d3}
Let $F_1$ admit an extremal contraction of type $(\mathrm{D_3})$, that is, a $\mathbb{P}^2$-bundle over a smooth curve. Then by \cite[9.1.4]{Ma83} (notice that the contraction of type $(\mathrm{D_2})$, that is, a quadric bundle over a curve, is not possible by \cite[9.2]{Ma83})
\begin{equation*}
\begin{split}
F_1=\mathbb{P}_{\mathbb{P}^1}(\OOO_{\mathbb{P}^1}\oplus\OOO_{\mathbb{P}^1}(a_1)\oplus \OOO_{\mathbb{P}^1}(a_2))&, \qquad \ 0 \leq a_1 \leq a_2, \\
D_{12}\sim H-a_1M, \qquad \ D_{13} & \sim H-a_2M, \qquad \ D_{14} \sim M, \\
D_{12}\simeq \mathbb{F}_{a_2}&, \qquad \ D_{13}\simeq \mathbb{F}_{a_1} , \qquad \ D_{14}\simeq \mathbb{P}^2.
\end{split}
\end{equation*}
Compute the normal bundles
\begin{equation*}
\begin{split}
\mathscr{N}_{D_{12}/F_1}=\OOO_{\mathbb{F}_{a_2}}(s+(a_2-a_1)f), \ \
\mathscr{N}_{D_{13}/F_1}=\OOO_{\mathbb{F}_{a_1}}(s+(a_1-a_2)f), \ \
\mathscr{N}_{D_{14}/F_1}=\OOO_{\mathbb{P}^2}.
\end{split}
\end{equation*}
Using the $d$-semistability condition we obtain
\begin{equation}
\begin{split}
\label{dss_d3_1}
\mathscr{N}_{D_{12}/F_2}^{-1}&=\mathscr{N}_{D_{12}/F_1}\otimes \OOO_{\mathbb{F}_{a_2}}(D_{13}|_{D_{12}})\otimes \OOO_{\mathbb{F}_{a_2}}(D_{14}|_{D_{13}}) \\
&=\OOO_{\mathbb{F}_{a_2}}(s+(a_2-a_1)f) \otimes \OOO_{\mathbb{F}_{a_2}}(s) \otimes \OOO_{\mathbb{F}_{a_2}}(f) \\
&= \OOO_{\mathbb{F}_{a_2}}(2s + (a_2-a_1+1)f), \\
\end{split}
\end{equation} 
\begin{equation}
\begin{split}
\label{dss_d3_2}
\mathscr{N}_{D_{13}/F_3}^{-1}&=\mathscr{N}_{D_{13}/F_1}\otimes \OOO_{\mathbb{F}_{a_1}}(D_{12}|_{D_{13}})\otimes \OOO_{\mathbb{F}_{a_1}}(D_{14}|_{D_{13}})\\
&=\OOO_{\mathbb{F}_{a_1}}(s+(a_1-a_2)f) \otimes \OOO_{\mathbb{F}_{a_1}}(s) \otimes \OOO_{\mathbb{F}_{a_1}}(f) \\
&= \OOO_{\mathbb{F}_{a_1}}(2s + (a_1-a_2+1)f), \\
\end{split}
\end{equation} 
\begin{equation}
\begin{split}
\label{dss_d3_3}
\mathscr{N}_{D_{14}/F_4}^{-1}&=\mathscr{N}_{D_{14}/F_1}\otimes \OOO_{\mathbb{P}^2}(D_{12}|_{D_{14}})\otimes \OOO_{\mathbb{P}^2}(D_{13}|_{D_{14}}) \\
&=\OOO_{\mathbb{P}^2} \otimes \OOO_{\mathbb{P}^2}(1) \otimes \OOO_{\mathbb{P}^2}(1) \\
&=\OOO_{\mathbb{P}^2}(2).
\end{split}
\end{equation}
Since $D_{14}\simeq \mathbb{P}^2$ it follows that $F_4$ has type $\mathrm{(C_2)_1}$ with $a=-2$. Consider the component~$F_2$. Notice that $D_{12}$ and $D_{24}$ are isomorphic to a Hirzebruch surfaces $\mathbb{F}_{a_2}$ and $\mathbb{F}_2$ respectively. By \eqref{dss_d3_1} and \eqref{dss_c2_1_2} one has $\mathscr{N}_{D_{12}/F_3}=\OOO_{\mathbb{F}_{a_2}}(-2s - (a_2-a_1+1)f)$, $\mathscr{N}_{D_{24}/F_4}=\OOO_{\mathbb{F}_2}(-s-2f)$. But then $F_4$ should have type $\mathrm{(C_2)_1}$ below with $k=1$, $n=2$, $m=0$, $a_2=m$ and $(a_2-a_1+1)=0$. Hence $a_1=1$. But by assumption $a_1\leq a_2$, a contradiction.

\subsection*{Case $(\mathrm{C_2})$}
\label{max_c2}
Let $F_1$ admit an extremal contraction of type $(\mathrm{C_2})$, that is, a $\mathbb{P}^1$-bundle over a surface. By \cite[8.2]{Ma83} there are two possibilities:

\subsubsection*{Subcase $(\mathrm{C_2})_1$}

\label{max_c2_1}
The base of the contraction is $\mathbb{P}^2$. Then
\begin{equation*}
\begin{split}
F_1 = \mathbb{P}_{\mathbb{P}^2}(\OOO_{\mathbb{P}^2}(&-a)\oplus\OOO_{\mathbb{P}^2}), \qquad \ a\geq 0, \\ 
D_{12} \sim H&, \qquad \ D_{13} \sim M, \qquad \ D_{14} \sim M, \\
D_{12} &\simeq \mathbb{P}^2, \qquad \ D_{13} \simeq\mathbb{F}_a, \qquad \ D_{14} \simeq \mathbb{F}_a.
\end{split}
\end{equation*}
Compute the normal bundles
\begin{equation*}
\begin{split}
\mathscr{N}_{D_{12}/F_1} = \OOO_{\mathbb{P}^2}(-a), \ \qquad 
\mathscr{N}_{D_{13}/F_1} = \OOO_{\mathbb{F}_a}(f), \ \qquad 
\mathscr{N}_{D_{14}/F_1} = \OOO_{\mathbb{F}_a}(f).
\end{split}
\end{equation*}
Using the $d$-semistability condition we obtain
\begin{equation}
\begin{split}
\label{dss_c2_1_1}
\mathscr{N}_{D_{12}/F_2}^{-1}&=\mathscr{N}_{D_{12}/F_1}\otimes \OOO_{\mathbb{P}^2}(D_{13}|_{D_{12}}) \otimes \OOO_{\mathbb{P}^2} (D_{14}|_{D_{12}}) \\
&= \OOO_{\mathbb{P}^2}(-a) \otimes \OOO_{\mathbb{P}^2}(1) \otimes \OOO_{\mathbb{P}^2} (1) \\
&= \OOO_{\mathbb{P}^2} (2-a), \\
\end{split}
\end{equation} 
\begin{equation}
\begin{split}
\label{dss_c2_1_2}
\mathscr{N}_{D_{13}/F_3}^{-1}&=\mathscr{N}_{D_{13}/F_1}\otimes \OOO_{\mathbb{F}_a}(D_{12}|_{D_{13}}) \otimes \OOO_{\mathbb{F}_a}(D_{14}|_{D_{13}}) \\
&=\OOO_{\mathbb{F}_a}(f) \otimes \OOO_{\mathbb{F}_a}(s) \otimes \OOO_{\mathbb{F}_a}(f) \\
&= \OOO_{\mathbb{F}_a}(s + 2f), \\
\end{split}
\end{equation} 
\begin{equation}
\begin{split}
\label{dss_c2_1_3}
\mathscr{N}_{D_{14}/F_4}^{-1}&=\mathscr{N}_{D_{14}/F_1}\otimes \OOO_{\mathbb{F}_a}(D_{12}|_{D_{14}}) \otimes \OOO_{\mathbb{F}_a}(D_{13}|_{D_{14}}) \\
&=\OOO_{\mathbb{F}_a}(f) \otimes \OOO_{\mathbb{F}_a}(s) \otimes \OOO_{\mathbb{F}_a}(f) \\
&= \OOO_{\mathbb{F}_a}(s + 2f).
\end{split}
\end{equation} 
One checks that the component $F_2$ either has type $\mathrm{(D_3)}$ or $\mathrm{(C_2)_1}$. The first possibility was already excluded above. Assume that $F_2$ is of type $\mathrm{(C_2)_1}$. Consider the component~$F_3$. Notice that $D_{13}$ and $D_{23}$ are isomorphic to a Hirzebruch surfaces $\mathbb{F}_a$ and $\mathbb{F}_{a'}$ respectively. By \eqref{dss_c2_1_2} one has $\mathscr{N}_{D_{13}/F_3}=\OOO_{\mathbb{F}_a}(-s-2f)$ and $\mathscr{N}_{D_{23}/F_3}=\OOO_{\mathbb{F}_{a'}}(-s-2f)$. But there are no components with such normal bundles in this list, a contradiction.

\subsubsection*{Subcase $(\mathrm{C_2})_2$}

\label{max_c2_2}
The base of the contraction is a Hirzebruch surface, see \cite[8.4]{Ma83}. Notice that the computations in \cite[8.3, 8.4]{Ma83} are not correct. We have 
\begin{equation*}
\begin{split}
F_1 = \mathbb{P}_{\mathbb{F}_n} (\OOO_{\mathbb{F}_n}( -k&s - (kn+m)f )\oplus \OOO_{\mathbb{F}_n}) \\
D_{12} \sim H&,\ \ D_{13} \sim M_f, \ D_{14} \sim M_s. \\
& D_{12} \simeq \mathbb{F}_n, \ D_{13} \simeq \mathbb{F}_k, \ D_{14} \simeq \mathbb{F}_m, \\
\end{split}
\end{equation*}
where $n\geq 0$, $k\geq 0$, $m\geq0$.
Compute the normal bundles:
\begin{equation}
\label{normal_bundles_c2_2}
\mathscr{N}_{D_{12}/F_1}=\OOO_{\mathbb{F}_n}(-ks - (kn+m)f), \ \ \mathscr{N}_{D_{13}/F_1}=\OOO_{\mathbb{F}_k}, \ \ \mathscr{N}_{D_{14}/F_1}=\OOO_{\mathbb{F}_m}(-nf).
\end{equation}
Since above we have excluded all the other cases, we may assume that all the components $F_i$ are of type $\mathrm{(C_2)_2}$. Using the $d$-semistability condition we obtain
\begin{equation}
\begin{split}
\label{dss_c2_2_1}
\mathscr{N}_{D_{12}/F_2}^{-1} &= \mathscr{N}_{D_{12}/F_1}\otimes \OOO_{\mathbb{F}_n}(D_{13}|_{D_{12}}) \otimes \OOO_{\mathbb{F}_n}(D_{14}|_{D_{12}}) \\ 
&= \OOO_{\mathbb{F}_n}(-ks - (kn+m)f) \otimes \OOO_{\mathbb{F}_n}(s) \otimes \OOO_{\mathbb{F}_n}(f) \\
&= \OOO_{\mathbb{F}_n}((1-k)s + (1-kn-m)f), \\
\end{split}
\end{equation}
\begin{equation}
\begin{split}
\label{dss_c2_2_2}
\mathscr{N}_{D_{13}/F_3}^{-1} &= \mathscr{N}_{D_{13}/F_1}\otimes \OOO_{\mathbb{F}_k}(D_{12}|_{D_{13}}) \otimes \OOO_{\mathbb{F}_k}(D_{14}|_{D_{12}}) \\ 
&= \OOO_{\mathbb{F}_k} \otimes \OOO_{\mathbb{F}_k}(s) \otimes \OOO_{\mathbb{F}_k}(f) \\
&= \OOO_{\mathbb{F}_k}(s+f), \\
\end{split} 
\end{equation}
\begin{equation}
\begin{split}
\label{dss_c2_2_3}
\mathscr{N}_{D_{14}/F_4}^{-1} &= \mathscr{N}_{D_{14}/F_1}\otimes \OOO_{\mathbb{F}_m}(D_{12}|_{D_{14}}) \otimes \OOO_{\mathbb{F}_m}(D_{13}|_{D_{14}}) \\
&= \OOO_{\mathbb{F}_m}(-nf) \otimes \OOO_{\mathbb{F}_m}(s) \otimes \OOO_{\mathbb{F}_m}(f) \\
&= \OOO_{\mathbb{F}_m}(s + (1-n)f).
\end{split}
\end{equation}
Suppose that $n=0$. Then by \eqref{dss_c2_2_2} and \eqref{dss_c2_2_3} one has that $\mathscr{N}_{D_{13}/F_3}$ and $\mathscr{N}_{D_{14}/F_4}$ are of the form $\OOO(-s-f)$. Using the above formulas one checks that for $F_1$ one has $n=0, k=1, m=1$, for $F_2$ and $F_2$ one has $n=1, k=1, m=0$. Then the d-semistability condition fails for $D_{24}\simeq D_{34} \simeq \mathbb{P}^1\times \mathbb{P}^1$.

Thus we may assume that we have $n>0$ for each component $F_i$. Then from the above formulas it follows that one of $D_{1i}$ has to have the normal bundle of the form $\OOO(-s-f)$. It is possible only for $D_{12}$. Thus, $k=1$, $kn+m=n+m=1$. Since $n>0$ one has $n=1$, $m=0$. This argument works for any component. The claim follows.
\end{proof}


\begin{proof}[Proof of Theorem \ref{thma-A}]
The first claim follows from Theorem \ref{Simplex}. Uniqueness of the degeneration in dimensions $1, 2$ and $3$ follows from Corollary \ref{dim2_case}, Theorem \ref{semistable_delpezzo} and Proposition \ref{maximal_dim3}. Notice that these degenerations coincide with the degeneration described in \cite[Example 3.6]{Hu06} which can be obtained by blowing up a flag of subspaces in a family with smooth fibers isomorphic to a projective space. Hence, in these cases the monodromy is trivial. 
\end{proof}

We conclude by formulating the following questions and problems.

\begin{question}
\label{question1}
Is the maximal semistable degeneration of Fano varieties unique in any dimension?
\end{question}

\begin{question}
\label{question2}
Is it true that any semistable family of Fano varieties after applying the MMP (for a suitable sequence of extremal rays) becomes a smooth family? This would imply that the monodromy is trivial in all cases.
\end{question}

\begin{problem}
\label{question3}
Classify the semistable degenerations of Fano varieties in dimension~$3$.
\end{problem}


\begin{thebibliography}{Hu}
\bibitem[CKS04]{CKS04}
A. Corti, J\'a. Koll\'ar, K. Smith,
\newblock {\em Rational and nearly rational varieties.}
\newblock Cambridge University Press 2004.

\bibitem[dFKX12]{dFKX12}
T. de Fernex, J\'a. Koll\'ar, and C. Xu,
\newblock {\em The dual complex of singularities,}
\newblock arXiv:1212.1675.

\bibitem[Fr83]{Fr83}
R. Friedman,
\newblock {\em Global smoothings of varieties with normal crossings,}
\newblock Ann. of Math. 118, 1983, 75–114.

\bibitem[Fu90]{Fu90}
T. Fujita,
\newblock {\em On Del Pezzo fibrations over curves,}
\newblock Osaka Math. J. 27, 1990, 229–245.

\bibitem[Hu06]{Hu06}
S. Hu,
\newblock {\em Semi-Stable Degeneration of Toric Varieties and Their Hypersurfaces,}
\newblock Communications in Analysis and Geometry 14 (1), 2006, 59—89.

\bibitem[IP99]{IP99}
V.A. Iskovskikh and Yu.G. Prokhorov,
\newblock {\em Algebraic Geometry V, Fano Varieties,}
\newblock Encyclopaedia of Mathematical Sciences, 47, Editors: Parshin, A.N., Shafarevich, I.R..

\bibitem[Ka07]{Ka07}
Ya. Kachi,
\newblock {\em Global smoothings of degenerate Del Pezzo surfaces with normal crossings,}
\newblock Journal of Algebra 307, 2007, 249—253.

\bibitem[KKMS73]{KKMS73}
G. Kempf, F. Knudsen, D. Mumford, B. Saint-Donat,
\newblock {\em Toroidal Embeddings I,}
\newblock Lect. Notes Math. Vol. 339, Springer-Verlag, Berlin Heidelberg, 1973.

\bibitem[KM98]{KM98}
J\'a. Koll\'ar, Sh. Mori,
\newblock {\em Birational geometry of algebraic varieties,}
\newblock Cambridge tracts in mathematics, 1998.

\bibitem[Kul77]{Kul77}
Vik. S. Kulikov,
\newblock {\em Degenerations of K3 surfaces and Enriques surfaces,}
\newblock Izv. Akad. Nauk USSR Ser. Mat. 41 (5), 1977, 1008—1042.

\bibitem[Ma83]{Ma83}
H. Maeda,
\newblock {\em Classification of logarithmic Fano $3$-folds,}
\newblock Proc. Japan Acad. Ser. A Math. Sci. 59 (6), 1983, 245—247.

\bibitem[Mo82]{Mo82}
S. Mori,
\newblock {\em Threefolds whose canonical bundles are not numerically effective,}
\newblock Annals of Mathematics, Vol. 116 (1), 1982, 133—176.

\bibitem[St08]{St08}
D. A. Stepanov,
\newblock {\em A note on resolution of rational and hypersurface singularities,}
\newblock Proc. Amer. Math. Soc. 136, no. 8, 2008, 2647—2654.

\bibitem[Per77]{Per77}
U. Persson,
\newblock {\em On degenerations of algebraic surfaces,}
\newblock Proc. Amer. Math. Soc. 11, no. 189, 1977.

\bibitem[Tz15]{Tz15}
N. Tziolas,
\newblock {\em Smoothings of Fano Varieties With Normal Crossing Singularities,}
\newblock Proceedings of the Edinburgh Mathematical Society 58 (3), 2015, 787—806.

\bibitem[Zh06]{Zh06}
Qi Zhang,
\newblock {\em Rational connectedness of log Q-Fano varieties,}
\newblock Journal für die reine und angewandte Mathematik (Crelles Journal) 590, 2006, 131—142.
\end{thebibliography}

\def\cprime{$'$} \def\mathbb#1{\mathbf#1}

\Addresses
\end{document}